\documentclass{article}

\usepackage{microtype}
\usepackage{verbatim}
\usepackage{graphicx}
\usepackage{rotating}
\usepackage{listings}

\usepackage{url}
\usepackage{hyperref}

\usepackage{pslatex}

\usepackage{xpatch} 

\usepackage{amsthm}
\usepackage[fleqn]{amsmath} 
\usepackage{amsfonts}
\usepackage{amssymb}

\newtheoremstyle{zoltanstyle}
  {1em} 
  {\topsep} 
  {} 
  {} 
  {\bfseries} 
  {.} 
  {.5em} 
  {} 

\theoremstyle{zoltanstyle}
\swapnumbers
\xpatchcmd\swappedhead{~}{.~}{}{}
\newtheorem{body}{}
\numberwithin{body}{section}

\newtheorem{corollary}[body]{Corollary}
\newtheorem{definition}[body]{Definition}
\newtheorem{example}[body]{Example}

\newtheorem{lemma}[body]{Lemma}

\newtheorem{proposition}[body]{Proposition}

\newtheorem{theorem}[body]{Theorem}

\expandafter\let\expandafter\oldproof\csname\string\proof\endcsname
\let\oldendproof\endproof

\renewenvironment{proof}[1][\proofname]{%
  \oldproof[\normalfont \bfseries #1.]%
}{\oldendproof}

\usepackage{enumitem}
\usepackage{mathptmx}
\usepackage{xspace}

\usepackage[T1]{fontenc}
\usepackage[utf8]{inputenc}

\newcommand{\SetComp}[2]{\left\{ {#1}\:\middle|\:{#2} \right\}}   

\title{Distributive lattices in o-minimal structures}
\author{Zoltan A. Kocsis\footnote{University of New South Wales, Kensington NSW 2052, Australia.}}
\date{26 June 2026} 

\begin{document}

\maketitle

\begin{abstract}
We investigate distributive lattices and Heyting algebras definable in o-minimal structures. We give a complete description of one-dimensional distributive lattices definable in o-minimal structures expanding a real-closed field, and prove a definable analogue of Birkhoff representation. As applications, we obtain a sharp lower bound on the dimension of definable Heyting algebras which contain infinite free subalgebras, and determine all one-variable equations in the language of Heyting algebras whose solution set can constitute a maximal-dimension proper subset of an infinite algebra.
\end{abstract}

This article contributes to the long-standing program of characterizing the infinite algebraic structures definable in o-minimal structures, which started with Pillay's~\cite{pillay-groups} results on groups and fields definable in o-minimal structures. Later, Otero,~Peterzil~and~Pillay \cite{otero-peterzil-rings} showed that an infinite ring without zero divisors definable in an o-minimal expansion of a real-closed field $R$ is definably isomorphic to one of $R$, $R[i]$, or the ring of quaternions over $R$. The literature is extensive: for more recent results on groups and rings, see e.g. Otero's survey~\cite{otero-survey}. Here we study the definability question for distributive lattices and Heyting algebras.

The motivation for the present work is the desire to understand whether tame distributive lattices can be extended with further ``tame'' operations. For example, every distributive lattice embeds into a Boolean algebra, but this embedding is not tame: o-minimal structures contain definable infinite distributive lattices, but not definable infinite Boolean algebras. The author wondered what (if any) pathologies could prevent a distributive lattice, definable in an o-minimal structure, from embedding into at least a Heyting algebra definable in the same structure.

Our main contributions are as follows. We obtain a complete combinatorial classification of one-dimensional bounded distributive lattices definable in an o-minimal expansion of a real-closed field (Theorem~\ref{thm:tame-variation}). This lets us prove a definable, dimension one analogue (Theorem~\ref{thm:birkhoff-embedding}) of the classic result that every distributive lattice embeds into some Heyting algebra, whose more usual proof strategies, such as Stone representation, are blocked in the o-minimal setting.

\vspace{0.5em} \textbf{Outline.} In Section~\ref{sec:preliminaries}, we introduce common notation and conventions, then collect known order-theoretic results required by our main development. These include Ramakrishnan's characterization of definable linear orders in o-minimal structures, and the fact that definable Boolean algebras are finite. We give a self-contained proof of the latter statement using only elementary o-minimal dimension theory rather than NIP/VC machinery.

\vspace{0.5em} In Section~\ref{sec:distributive-lattices-of-dim-one}, we study the structure of one-dimensional bounded distributive lattices. We show the definability of Artin gluing, and use it to obtain a large class of one-dimensional definable Heyting algebras. We then classify (up to definable isomorphism) the one-dimensional definable bounded distributive lattices (Theorem~\ref{thm:tame-variation}).

\vspace{0.5em} In Section~\ref{sec:birkhoff-style-results}, we show that each poset which admits a definable chain decomposition embeds definably into a Heyting algebra (Theorem~\ref{thm:dcr-for-chain-decomposed-orders}). Making heavy technical use of the structure theory developed in the previous section, we construct join-irreducibly generated completions of one-dimensional distributive lattices~(Theorem~\ref{thm:jig-embedding}). This leads immediately to a definable analogue (Theorem~\ref{thm:birkhoff-embedding}) of the classic result that every distributive lattice embeds into a Heyting algebra.

\vspace{0.5em} Finally, in Section~\ref{sec:applications}, we give two applications of the previous results which shed some light on the algebraic properties of infinite definable distributive lattices. We show the existence of definable Heyting algebras which contain free subalgebras~(Example~\ref{example:definable-ha-with-free-subalgebra}), and determine which one-variable systems of equations can have solution sets of dimension $n$ in an $n$-dimensional Heyting algebra.

\section{Preliminaries}\label{sec:preliminaries}

\begin{body}
For the remainder of this article we fix an arbitrary o-minimal structure $$(R, \leq, +, -, \times, 0, 1)$$ expanding a real-closed field. In this section we define our notation and recall the results that we will invoke in our main development. As customary in model theory, we do not distinguish between nested tuples of elements of $R$, i.e. we freely identify $(R^n)^m$ with $R^{nm}$ throughout the text. Unless indicated otherwise, we use the conventions of the textbook ``Tame Topology and O-minimal Structures''~\cite{vandendries-tame}. In particular, we allow parameters in formulae, and take \textit{definable set} to mean a set \textit{definable with parameters}.
\end{body}

\begin{definition}\label{def:definable}
Consider a first-order language $\mathcal{L}$ and an $\mathcal{L}$-structure $(M,\dots)$. Recall that if
\begin{itemize}
    \item $M$ is a definable subset of $R^n$,
    \item for each predicate symbol $P \in \mathcal{L}$ of arity $a$, the set $$\SetComp{(x_1,\dots,x_a)\in M^a}{P(x_1,\dots,x_a)}$$ is a definable subset of $R^{an}$, and
    \item for each function symbol $f \in \mathcal{L}$ of arity $a$, the set $$\SetComp{(x_1,\dots,x_a,y)\in M^{a+1}}{f(x_1,\dots,x_a)=y}$$ is a definable subset of $R^{(a+1)n}$,
\end{itemize}
then we say that \textit{$M$ is a definable $\mathcal{L}$-structure in the o-minimal structure $R$}.
\end{definition}

\begin{body}
From here onward, \textit{definable structure} refers to a structure definable in this particular $R$ in the sense of Definition~\ref{def:definable}.
\end{body}

\subsection*{Definable partial orders}

\begin{definition}\label{def:definable-poset}
Consider a definable partial order $(L, \sqsubseteq)$ and elements $x,y \in L$ with $x \sqsubset y$. We call the set
$$\SetComp{z \in L}{x \sqsubset z \wedge z \sqsubset y}$$ 
the \textit{$\sqsubseteq$-interval} given by $x,y$ and denote it $(x,y)_\sqsubseteq$. We define the notations $\left(x,y\right]_\sqsubseteq$, $\left[x,y\right)_\sqsubseteq$ and $\left[x,y\right]_\sqsubseteq$ for the other $\sqsubseteq$-convex subsets of $L$ the obvious way.
\end{definition}

\begin{definition}
We call a definable partial order $(L, \sqsubseteq)$ \textit{homogeneous $n$-dimensional} if $\dim L = n$ and every $\sqsubseteq$-interval has dimension exactly $n$.
\end{definition}

\begin{body}
In an o-minimal structure, one can write every definable subset of $R$ as a finite union of intervals and singleton points. In Corollary~\ref{cor:homogeneous-decomposition} we show the analogous property for any homogeneous one-dimensional linear order $(L, \sqsubseteq)$: the subsets of $L$ decompose into $\sqsubseteq$-intervals and points. Our proof uses a classic result of Ramakrishnan~\cite{ramakrishnan-linear}, restated for completeness as Theorem~\ref{thm:ramakrishnan-linear}. One cannot do away with the homogeneity condition, as shown in Example~\ref{example:homogeneous-decomposition-required}.
\end{body}

\begin{theorem}[Ramakrishnan~\cite{ramakrishnan-linear}]\label{thm:ramakrishnan-linear}
Every definable linear order $(L, \sqsubseteq)$ order-embeds definably into $(R^{1 + \dim L},\leq_{lex})$ where $\leq_{lex}$ denotes the lexicographic order.
\end{theorem}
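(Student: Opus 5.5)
Ramakrishnan's theorem is cited, but a proof sketch can still be given. The plan is induction on $n=\dim L$, building the embedding coordinate by coordinate: the first coordinate records a definable ``level'' of each element, and the remaining $n$ coordinates embed each level (which will have smaller dimension) by induction. The uniformity we need comes from applying the induction in families with parameters.

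\emph{Base case $\dim L=0$.} Then $L$ is finite, say $L=\{a_1\sqsubset\dots\sqsubset a_k\}$. The map $a_i\mapsto i\in R$ is a definable order embedding into $R^{1}$.

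\emph{Inductive step.} First reduce to $L\subseteq R^m$ being a single cell, and after a definable bijection, an open cell in some $R^{n}$. A finite $\sqsubseteq$-ordered sum of definable embeddings can be merged by prefixing a constant first coordinate and shifting, but the cells of a cell decomposition are interleaved in the order, not placed in blocks. So I would instead handle the general case via a definable \emph{type-completion}: consider the family of $\sqsubseteq$-cuts $C_x=\{y : y\sqsubseteq x\}$. The key step is to find a definable map $f\colon L\to R$ with $x\sqsubseteq y\Rightarrow f(x)\le f(y)$ whose fibres $f^{-1}(t)$ have dimension $<n$ (or are at least $\sqsubseteq$-convex of smaller dimension). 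Given such an $f$, each fibre is a definable linear order of dimension $\le n-1$. Applying the inductive hypothesis uniformly in the parameter $t$, we get definable embeddings $g_t\colon f^{-1}(t)\to R^{n}$. The map $x\mapsto (f(x),g_{f(x)}(x))$ then lands in $R^{1+n}$ and is lex-order preserving. Two facts are needed here: uniformity of the induction, which I would state as the stronger claim ``for definable families'', and the fact that a convex decomposition into fibres composes correctly with lex order.

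\textbf{Main obstacle.} The main obstacle is constructing $f$. A natural candidate is to pick a definable curve or one-dimensional $\sqsubseteq$-cofinal skeleton: choose a definable one-dimensional subset $S\subseteq L$ meeting ``enough'' intervals, order-embed $S$ into $R$ (one-dimensional definable linear orders embed into $R$ via o-minimal monotonicity, after piecewise reparametrizing), and send $x$ to the supremum in $R$ of the image of $\{s\in S: s\sqsubseteq x\}$, which exists by o-minimality (definable completeness). Monotonicity of $f$ is automatic. The hard part is proving that the fibres drop dimension. I would try to choose $S$ by a dimension argument: if every choice left an $n$-dimensional fibre, I would use cell decomposition and the fact that $\sqsubseteq$ restricted to an open cell is a definable relation of finite ``complexity''. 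The aim is to extract a definable family of pairwise $\sqsubseteq$-disjoint intervals indexed by an infinite set, and then refine $S$ to separate them, contradicting maximality of $S$. I expect this step to require the careful use of definable choice (available since $R$ expands a field) and to be where the real difficulty of Ramakrishnan's theorem lies.
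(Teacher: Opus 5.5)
\textbf{There is a genuine gap: the existence of $f$, which you leave open, carries the whole theorem, and the route you sketch for it fails.}

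The paper does not prove this statement; it imports it from Ramakrishnan. So your sketch has to stand on its own, and it does not.

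The outer scaffolding is fine. The base case is correct. Suppose you had a definable $\sqsubseteq$-monotone $f\colon L\to R$ whose fibres (automatically $\sqsubseteq$-convex) all have dimension $<n$. Then concatenating $f$ lexicographically with a uniformly chosen family of embeddings of the fibres into $R^{n}$ would prove the theorem.

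But the existence of such an $f$ is not an auxiliary lemma; it is essentially the whole theorem. Conversely, an embedding $\phi\colon L\to R^{n+1}$ yields such an $f$. Only finitely many fibres of the first coordinate of $\phi$ can be $n$-dimensional. Subdivide those by the second coordinate, and reparametrize the finitely many pieces into consecutive intervals of $R$. Since you explicitly leave this step open, the inductive step is not established. The uniform-in-families version of the induction hypothesis is also only asserted. It is recoverable, e.g.\ by compactness plus definable Skolem functions, but it needs an argument.

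The route you sketch for constructing $f$ fails for two reasons.

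First, one-dimensional definable linear orders need not embed definably into $(R,\le)$. Take Example~\ref{example:homogeneous-decomposition-required}: $\{1,2\}\times R$ ordered by the second coordinate first. A definable order embedding $g$ would make $g(\{1\}\times R)$ a finite union of points and intervals. The preimages of these are $\sqsubseteq$-convex subsets of $\{1\}\times R$, hence singletons, which forces $\{1\}\times R$ to be finite. This is exactly why Proposition~\ref{prop:homogeneous-order-embedding} assumes homogeneity. So $S$ must be restricted, say to a $\sqsubseteq$-monotone curve, which works against making it large.

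Second, for $f(x)=\sup g(\{s\in S : s\sqsubseteq x\})$ you have $f|_S=g$, which is injective. So each fibre of $f$ meets $S$ in at most one point. Hence the dimension drop is \emph{equivalent} to $S$ meeting every $n$-dimensional $\sqsubseteq$-convex subset of $L$ in at least two points.

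Your intended contradiction, an infinite definable family of pairwise disjoint bad intervals against a ``maximal'' $S$, cannot arise. The fibres of $f$ are pairwise disjoint and definably indexed by a subset of $R$. By the fibre-dimension formula, at most finitely many of them are $n$-dimensional.

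The real obstruction is a finite set of $n$-dimensional convex blocks that $S$ misses. Each block is again an arbitrary $n$-dimensional definable linear order. Repairing $S$ inside them is therefore the original problem in the same dimension, so induction on dimension gives no leverage. You have also set up no notion of maximality or termination measure.

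The missing idea is a way to produce, in an arbitrary $n$-dimensional definable linear order, a definable monotone map to $R$ that is non-constant on every $n$-dimensional convex subset. That is where all the substance lies.
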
 

\begin{proposition}\label{prop:ordered-sums-preserve-r}
Given two definable linear orders $L_1$ and $L_2$ that are definable order-isomorphic to a definable subset of $(R,\leq)$, the ordered sum of $L_1$ and $L_2$ also admits a definable order-isomorphism to some subset of $(R, \leq)$.
\end{proposition}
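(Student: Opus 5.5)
Let $f_1 : L_1 \to R$ and $f_2 : L_2 \to R$ be the given definable order-embeddings, with images $A_1 = f_1(L_1)$ and $A_2 = f_2(L_2)$. The plan is to compose each $f_i$ with a definable order-preserving bijection of $R$ onto a bounded open interval, so that the two images become separated, and then glue the two resulting maps. The ordered sum $L_1 \oplus L_2$ can be realised definably as the set $(\{0\}\times L_1) \cup (\{1\}\times L_2)$, ordered by putting every element of the first component below every element of the second and using $\sqsubseteq_1$, respectively $\sqsubseteq_2$, inside each component. This set is definable in $R^{1+n}$, after padding $L_1$ and $L_2$ into a common ambient $R^n$ if necessary.

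The key tool is the semialgebraic, hence definable, map
\[
  \phi(t) = \frac{t}{1+\sqrt{1+t^2}} .
\]
This is a strictly increasing bijection $R \to (-1,1)$ in any real-closed field. To check monotonicity, one either computes the derivative, or uses the explicit inverse $s \mapsto 2s/(1-s^2)$ together with a direct comparison. A simpler alternative is $t/(1+|t|)$, which is strictly increasing from $R$ onto $(-1,1)$ and avoids square roots altogether. I will use $\psi(t)=t/(1+|t|)$.

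Now define $F : L_1 \oplus L_2 \to R$ by
\[
  F(0,x) = \psi(f_1(x)) - 1 \in (-2,0), \qquad F(1,y) = \psi(f_2(y)) + 1 \in (0,2).
\]
The verification then has three parts. First, $F$ is definable, since it is given by cases from the definable maps $f_1$, $f_2$ and $\psi$. Second, $F$ is injective and order-preserving on each summand, because it is a composition of order-embeddings with a translation. Third, $F$ maps every element of the first summand strictly below every element of the second, since the two images lie in $(-2,0)$ and $(0,2)$. Together these show that $F$ is an order-embedding, so it is a definable order-isomorphism onto its image $(\psi(A_1)-1)\cup(\psi(A_2)+1)$, which is a definable subset of $R$.

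There is no serious obstacle here. The only point needing care is to confirm that $\psi$ is strictly increasing on all of $R$ in an arbitrary real-closed field, not just in $\mathbb{R}$. This follows by a case split on the signs of $s<t$. If both are nonnegative, then $s/(1+s) < t/(1+t)$ is equivalent to $s<t$ after clearing the positive denominators. The negative case is symmetric, using that $\psi$ is odd. In the mixed-sign case, the values have opposite signs, so the inequality is immediate.
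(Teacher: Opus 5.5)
Your proof is correct: $\psi(t)=t/(1+|t|)$ is a definable, strictly increasing bijection $R\to(-1,1)$, and your sign-case check works in any ordered field. Shifting the two compressed images into $(-2,0)$ and $(0,2)$ then gives a definable order-embedding of the tagged ordered sum onto a definable subset of $R$.

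The paper states this proposition without proof. Your compress-and-shift argument is exactly the routine justification it implicitly relies on; compare the embedding of fibers into $(0,1)$ in the proof of Proposition~\ref{prop:homogeneous-order-embedding}. The finitely-many-summand version used there follows from your two-summand case by induction.
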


\begin{proposition}\label{prop:homogeneous-order-embedding}
Every homogeneous one-dimensional definable linear order $(L, \sqsubseteq)$ is order-isomorphic to a definable subset of $(R, \leq)$.
\begin{proof}
By Ramakrishnan's result (Theorem~\ref{thm:ramakrishnan-linear} above), the definable linear order $(L, \sqsubseteq)$ embeds definably into $(R^{1 + \dim L},\leq_{lex})$ where $\leq_{lex}$ denotes the lexicographic order. Thus, we may assume without loss of generality that $L \subseteq R^2$ and $\sqsubseteq$ denotes the lexicographic order restricted to $L$.
Consider the definable sets of points
$$ E_\infty = \SetComp{x \in R}{\exists y_1,y_2\in R.y_1\neq y_2 \wedge (x,y_1) \in L \wedge (x, y_2) \in L},$$
$$ E_{1} = \SetComp{x \in R}{\exists! y \in R. (x, y) \in L}.$$
Notice that the first coordinate of any element of $L$ belongs to one of these sets and if $x \in E_\infty$, then by the homogeneity condition infinitely many $y$ satisfy $(x,y) \in L$. Thus, $E_\infty$ consists of only finitely many points: otherwise it would contain some interval, and so one would have $\dim \SetComp{(x,y) \in L}{x \in E_\infty} \geq 2$, contradicting that $\dim L = 1$.
Decompose $E_1 \cup E_\infty$ into a finite sequence of singleton points and intervals. By the lexicographic property, $(L, \sqsubseteq)$ arises as the ordered sum of these finitely many singletons and intervals. Each interval belongs to $E_1$, and when equipped with the induced order, admits a definable order-isomorphism to some subset of $(R, \leq)$. For each fixed $x \in E_\infty$, the fiber $L_x = \SetComp{y \in R}{(x,y) \in L}$ also embeds into $(0,1) \subseteq R$ with the usual ordering.  Applying Proposition~\ref{prop:ordered-sums-preserve-r} lets us conclude that $(L, \sqsubseteq)$ admits a definable order-isomorphism to a subset of $(R, \leq)$.
\end{proof}
\end{proposition}

\begin{corollary}\label{cor:homogeneous-decomposition}
Consider a homogeneous one-dimensional linear order $(L, \sqsubseteq)$, along with a definable function $f: L \rightarrow D$ to some finite definable set $D$. Then one can decompose $L$ into finitely many singleton points and $\sqsubseteq$-intervals on each of which $f$ is constant.
\end{corollary}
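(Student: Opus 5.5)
By Proposition~\ref{prop:homogeneous-order-embedding}, there is a definable order-isomorphism $\varphi\colon (L,\sqsubseteq)\to (S,\leq)$ onto a definable set $S\subseteq R$. The plan is to transport the problem along $\varphi$, solve it in $R$ with the usual o-minimal cell decomposition, and pull the answer back.

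First, consider $g = f\circ\varphi^{-1}\colon S\to D$. It is definable, and each fibre $g^{-1}(d)$, for $d\in D$, is a definable subset of $R$. Since $D$ is finite, o-minimality lets us write each fibre as a finite union of points and open intervals of $R$. Taking a common refinement of these finitely many decompositions, $S$ becomes a finite disjoint union of singletons and sets of the form $S\cap(a,b)$ (with $a,b\in R\cup\{\pm\infty\}$) on which $g$ is constant. We can arrange these sets to be convex subsets of $S$: a fibre is a finite union of intervals of $R$, and each such interval meets $S$ in an $S$-convex set.

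Second, pull back along $\varphi$. Each piece becomes a $\sqsubseteq$-convex subset $C\subseteq L$ on which $f$ is constant. The remaining task is to write each such $C$ as a $\sqsubseteq$-interval together with finitely many points. This is the step that needs care. The endpoints $a,b$ of the corresponding interval in $R$ need not lie in $S$, and a convex subset of $L$ need not have the form $(x,y)_\sqsubseteq$ for $x,y\in L$. For instance, it could be an initial segment with no endpoint in $L$.

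To handle this, I would use the fact that $S$ itself is a finite union of points and intervals of $R$, and refine the decomposition above by the endpoints of these intervals. After this refinement, every infinite piece is $S\cap(a,b)=(a,b)$, an open interval of $R$ contained in $S$. Such a piece has no endpoints in $S$, but that is harmless: pick a point $c\in(a,b)$, split the piece into $(a,c)$, $\{c\}$ and $(c,b)$, and handle each half separately. A half such as $(a,c)$ without endpoints in $S$ still needs to be realised as $(x,y)_\sqsubseteq$. I would read the paper's definition of $\sqsubseteq$-interval leniently here, as in the definition's remark about ``$\sqsubseteq$-convex subsets'', and allow bounds that are missing or $\pm\infty$. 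The alternative is to adjoin endpoints formally.

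If the strict reading with $x,y\in L$ is intended, I expect this last point to be the main obstacle. In that case I would use homogeneity to argue as follows. Only finitely many boundary points of $S$ can fail to lie in $S$. The pieces adjacent to those missing points are initial or final segments of $L$. The statement should then be read with unbounded intervals counted as $\sqsubseteq$-intervals, since, for example, $L=(0,1)$ itself cannot be written as a finite union of points and bounded open intervals with endpoints in $L$.
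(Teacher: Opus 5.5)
This is essentially the paper's argument: the corollary comes straight from the definable order-isomorphism of Proposition~\ref{prop:homogeneous-order-embedding}, followed by o-minimal decomposition of the finitely many fibres of $f$ and pulling back. Your worry about endpoints is settled by the paper's standing convention, stated just after Example~\ref{example:homogeneous-decomposition-required}, that interval endpoints may lie in the definable completion $\overline{L}$, which is your lenient reading. Your fallback claim that pieces adjacent to missing boundary points are initial or final segments is false, however. For example, $S=(0,1)\cup(1,2)\cup(2,3)$ is homogeneous, and an $f$ singling out $(1,2)$ cannot be handled with endpoints in $L\cup\{\pm\infty\}$, so completion endpoints, not merely unbounded intervals, are genuinely needed.
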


\begin{example}\label{example:homogeneous-decomposition-required}
Consider the set $L =\SetComp{(x,y) \in R^2}{x = 1 \vee x = 2}$ ordered by the relation $(x,y) \sqsubseteq (x',y')$ which holds when $y' > y$ or when $y' = y$ and $x' \geq x$. Then one cannot decompose $L$ into finitely many singleton points and $\sqsubseteq$-intervals on which the projection function $\pi(x,y) = x$ is constant. 
\end{example}

\begin{body}
It also follows from Theorem~\ref{thm:ramakrishnan-linear} that all definable linear orders $(L, \sqsubseteq_L)$ admit a \textit{definable completion}, in other words a definable embedding into some linear order $(\overline{L}, \sqsubseteq)$ in which every definable set of elements of $L$ has a unique least upper bound in $\overline{L}$. Throughout the article, we permit the interval notations of Definition~\ref{def:definable-poset} to take their endpoints in the definable completion.
\end{body}

\subsection*{Definable lattices}

\begin{definition}\label{def:lattice}
We call a partial order $(H, \sqsubseteq)$ a \textit{lattice} if every two-element subset $\{x,y\} \subseteq H$ has a least upper bound $x \sqcup y$ and a greatest lower bound $x \sqcap y$. If furthermore the lattice $(H, \sqsubseteq)$ has an $\sqsubseteq$-maximum element $\top$ and $\sqsubseteq$-minimum element $\bot$, we call it a \textit{bounded lattice}.
\end{definition}

\begin{proposition}\label{prop:lattice-equidefinable}
In a bounded lattice $(H, \sqsubseteq)$, if any one of the following is definable, then so are the other two,
\begin{enumerate}
    \item the least upper bound map $(x,y) \mapsto x \sqcup y$ as a function $H^2 \rightarrow H$,
    \item the greatest lower bound map $(x,y) \mapsto x \sqcap y$ as a function $H^2 \rightarrow H$,
    \item the order relation $\sqsubseteq$ as a definable subset of $H^2$.
\end{enumerate}
\begin{proof}
Least upper bounds admit a first-order definition in terms of $\sqsubseteq$. In a lattice $x \sqsubseteq y$ holds precisely if $x \sqcap y = x$, equivalently if $x \sqcup y = y$.
\end{proof}
\end{proposition}

\begin{body}\label{body:lattice-notation}
In light of Proposition~\ref{prop:lattice-equidefinable}, we adopt the following notational conventions. In any result concerning a definable (bounded, distributive, etc.) lattice $(X, \sqsubseteq_X)$, we let $\sqcup_X$ stand for the least upper bound map definable from $\sqsubseteq_X$. Similarly, $\sqcap_X$ stands for the corresponding greatest lower bound map, $\top_X$ refers to the $\sqsubseteq$-maximum element of $X$, and $\bot_X$ to the $\sqsubseteq$-minimum element of $X$. We also extend the $\sqsubseteq$-interval notation of Definition~\ref{def:definable-poset}, so that $(x,y)_X$ denotes the $\sqsubseteq_X$-interval between the elements $x$ and $y$. For example, in a result concerning the lattices $(X, \sqsubseteq_X)$, $(Y, \sqsubseteq_Y)$ and $(H, \sqsubseteq)$,
\begin{itemize}
    \item the symbol $\sqcap_X$ denotes the greatest lower bound function $X^2 \rightarrow X$ with respect to the partial order $\sqsubseteq_X$,
    \item the symbol $\bot_Y$ denotes the minimum element of $Y$ with respect to the partial order $\sqsubseteq_Y$,
    \item the symbol $\sqcup$ denotes the least upper bound function $H^2 \rightarrow H$ with respect to the partial order $\sqsubseteq$,
    \item the formula $[a,b]_X$ stands for the set $\SetComp{x \in X}{a \sqsubseteq_X x \wedge x \sqsubseteq_X b}$
\end{itemize}
and so on. Keep in mind however that we use the symbols $\wedge$ / $\vee$ in their logical sense (conjunction / disjunction) only, never as least upper bound operations of the definable order $\leq$.
\end{body}

\begin{definition}\label{def:heyting-algebra}
We call a lattice $(H, \sqsubseteq)$ \textit{distributive} if the algebraic identity $$x \sqcap (y \sqcup z) = (x \sqcap y) \sqcup (x \sqcap z)$$ holds for all $x,y,z \in H$. In turn, we call the bounded distributive lattice $(H, \sqsubseteq)$ a \textit{Heyting algebra} if the set $\SetComp{z \in H}{x \sqcap z \sqsubseteq y}$ has a $\sqsubseteq$-maximum element for each $x,y \in H$. If so, we denote this unique element by $x \Rightarrow y$, and call $\Rightarrow$ the \textit{implication operation} of the Heyting algebra $(H, \sqsubseteq)$.
\end{definition}

\begin{proposition}\label{prop:heyting-equidefinable}
In a Heyting algebra $(H, \sqsubseteq)$, if any one of the following are definable, then so are the other three:
\begin{enumerate}
    \item the implication operation $(x,y) \mapsto x \Rightarrow y$ as a function $H^2 \rightarrow H$,
    \item the least upper bound map $(x,y) \mapsto x \sqcup y$ as a function $H^2 \rightarrow H$,
    \item the greatest lower bound map $(x,y) \mapsto x \sqcap y$ as a function $H^2 \rightarrow H$,
    \item the order relation $\sqsubseteq$ as a definable subset of $H^2$.
\end{enumerate}
\begin{proof}
The equivalence of 2-4 follows from Proposition~\ref{prop:lattice-equidefinable}. The operation $\Rightarrow$ was defined in terms of the order in Definition~\ref{def:heyting-algebra}. Conversely, $\Rightarrow$ lets us define $\sqsubseteq$ using the fact that $(x \Rightarrow y) = (x \Rightarrow x)$ holds precisely if $x \sqsubseteq y$.
\end{proof}
\end{proposition}

\begin{body}
We assume that the reader is familiar with distributive lattice concepts such as join-irreducibility: Chapter~III of the textbook ``Practical Foundations of Mathematics''~\cite{taylor-pfom} covers all order theory prerequisites. For a comprehensive, contemporary reference on Heyting algebras, we refer the reader to Esakia's textbook~\cite{esakia-heyting}. Since verifying that a partial order $(H, \sqsubseteq)$ satisfies the definition of Heyting algebra above can get tedious, we include a well-known algebraic characterization of Heyting algebras in Proposition~\ref{prop:equational-heyting-algebra} below.
\end{body}

\begin{proposition}\label{prop:equational-heyting-algebra}
The structure $(H, \sqcap, \sqcup, \bot, \top)$ constitutes a bounded lattice precisely if $\sqcap, \sqcup$ are commutative and associative binary operations, and the following identities hold for all $x,y,z \in H$:
\begin{enumerate}
    \item $\top \sqcap x = x = \bot \sqcup x$,
    \item $\top \sqcup x = \top$,
    \item $\bot \sqcap x = \bot$, and
    \item $x \sqcap (y \sqcup x) = x = x \sqcup (y \sqcap x)$.
\end{enumerate}
Given a binary operation $\Rightarrow : H^2 \rightarrow H$, the structure $(H, \sqcap, \sqcup, \Rightarrow, \bot, \top)$ constitutes a Heyting algebra precisely if the following four identities, as well as all the bounded lattice identities above, hold for all $x,y,z \in H$:
\begin{enumerate}
    \item $x \Rightarrow x = \top$,
    \item $x \sqcap (x \Rightarrow y) = x \sqcap y$,
    \item $y \sqcap (x \Rightarrow y) = y$, and
    \item $x \Rightarrow (y \sqcap z) = (x \Rightarrow y) \sqcap (x \Rightarrow z)$.
\end{enumerate} 
\end{proposition}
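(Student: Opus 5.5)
The statement has two parts, and I would handle each by translating between the order-theoretic definitions and the identities.

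\emph{Bounded lattice part.} For the forward direction, assume $(H,\sqsubseteq)$ is a bounded lattice. Then commutativity and associativity of $\sqcap,\sqcup$ hold because suprema and infima of $\{x,y\}$ and $\{x,y,z\}$ do not depend on order or bracketing. Identities 1--3 state that $\top$ is maximum and $\bot$ is minimum. Identity 4 is the absorption law: $y\sqcup x \sqsupseteq x$ gives $x\sqcap(y\sqcup x)=x$, and dually for the other half.

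For the converse, define $x\sqsubseteq y$ iff $x\sqcap y=x$. I would first derive idempotence from absorption: $x\sqcap x = x\sqcap(x\sqcup(x\sqcap x))=x$, using both absorption laws in turn. Next I would show that $x\sqcap y=x$ iff $x\sqcup y=y$, again via absorption. Then $\sqsubseteq$ is a partial order:
\begin{itemize}
\item reflexivity is idempotence;
\item antisymmetry follows from commutativity;
\item transitivity follows from associativity.
\end{itemize}
After that I check that $x\sqcap y$ is the greatest lower bound and $x\sqcup y$ the least upper bound. Finally, identities 1--3 make $\top$ and $\bot$ the extrema.

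\emph{Heyting part, forward direction.} Assume $H$ is a Heyting algebra, and use the adjunction $z\sqsubseteq x\Rightarrow y \iff x\sqcap z\sqsubseteq y$. Each identity then follows directly:
\begin{itemize}
\item Identity 1: $x\sqcap\top\sqsubseteq x$, so $x\Rightarrow x=\top$.
\item Identity 3: $x\sqcap y\sqsubseteq y$, so $y\sqsubseteq x\Rightarrow y$.
\item Identity 2: $x\sqcap(x\Rightarrow y)\sqsubseteq y$ gives $\sqsubseteq x\sqcap y$, and identity 3 gives the reverse inequality.
\item Identity 4: by the adjunction, $z'\sqsubseteq x\Rightarrow(y\sqcap z)$ iff $x\sqcap z'\sqsubseteq y$ and $x\sqcap z'\sqsubseteq z$, iff $z'\sqsubseteq (x\Rightarrow y)\sqcap(x\Rightarrow z)$.
\end{itemize}

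\emph{Heyting part, converse direction.} Assume the eight identities. The lattice part gives a bounded lattice, and I must show that $x\Rightarrow y$ is the maximum of $\{z: x\sqcap z\sqsubseteq y\}$.

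Membership is immediate: $x\sqcap(x\Rightarrow y)=x\sqcap y\sqsubseteq y$ by identity 2.

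Maximality is the main step. I would first show that $\Rightarrow$ is monotone in its second argument. If $y\sqsubseteq y'$, then $y=y\sqcap y'$, so by identity 4
\[
x\Rightarrow y=(x\Rightarrow y)\sqcap(x\Rightarrow y'),
\]
that is, $x\Rightarrow y\sqsubseteq x\Rightarrow y'$. Now suppose $x\sqcap z\sqsubseteq y$. Identity 3 gives $z\sqsubseteq x\Rightarrow z$. Identity 4 with identity 1 gives
\[
x\Rightarrow(x\sqcap z)=(x\Rightarrow x)\sqcap(x\Rightarrow z)=x\Rightarrow z.
\]
Hence $z\sqsubseteq x\Rightarrow(x\sqcap z)\sqsubseteq x\Rightarrow y$, where the last step is monotonicity.

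\emph{Distributivity.} The paper's definition of Heyting algebra requires distributivity, so it must also be derived. Any lattice with a right adjoint to $x\sqcap-$ is distributive. To see this, note that $x\sqcap y$ and $x\sqcap z$ both lie below $w=(x\sqcap y)\sqcup(x\sqcap z)$. Hence $y,z\sqsubseteq x\Rightarrow w$, so $y\sqcup z\sqsubseteq x\Rightarrow w$, which gives $x\sqcap(y\sqcup z)\sqsubseteq w$. The reverse inequality holds in every lattice.

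I expect the main obstacles to be this last point about distributivity and the maximality argument. Both are short once the adjunction has been recovered from the identities.
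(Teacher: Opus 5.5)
The paper gives no proof of this proposition. It is stated as a well-known algebraic characterization of Heyting algebras and deferred to the standard literature, so there is no argument in the paper to compare yours against.

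Your proof is correct and is the standard verification. Two points are handled well:
\begin{itemize}
\item You are right that distributivity must be derived in the converse direction, because Definition~\ref{def:heyting-algebra} only calls a \emph{distributive} bounded lattice a Heyting algebra.
\item Your order of steps avoids circularity: membership, then monotonicity of $\Rightarrow$ in its second argument, then maximality, and only then distributivity from the recovered adjunction. The maximality argument uses only the lattice laws and the four implication identities, never distributivity.
\end{itemize}

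A minor remark: in the bounded-lattice converse you only use identity~1 to obtain the extrema. This reflects the fact that identities~2 and~3 of the first list already follow from identity~1 together with absorption.
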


\begin{example}\label{example:boolean-as-heyting}
Every Boolean algebra $(H, \sqcap, \sqcup, \bot, \top, \neg_H)$ is a Heyting algebra with implication operation $x \Rightarrow y$ given as $\neg_H x \sqcup y$. Conversely, a Heyting algebra $H$ is a Boolean algebra precisely if $x \Rightarrow y = (x \Rightarrow \bot) \sqcup y$ for all $x,y \in H$.
\end{example}

\begin{body}
We extend the notational conventions of Section~\ref{body:lattice-notation} to Heyting algebras: given a Heyting algebra $(X,\sqsubseteq_X)$, we let $\Rightarrow_X$ denote the corresponding implication operation. As customary in the Heyting algebra literature, we also introduce the abbreviated form $\neg_X a$ denoting the term $a \Rightarrow_X \bot$. Similarly to $\wedge$ / $\vee$, we retain the thin arrow notation $\rightarrow$ as \textit{logical} implication, and do not use it to denote the implication operation of a definable Heyting algebra.
\end{body}

\begin{proposition}\label{prop:adjoint-functor-thm}
Consider two posets $(P, \sqsubseteq_P)$ and $(Q, \sqsubseteq_Q)$ in which every subset has a least upper bound, and a monotone map $l: P \rightarrow Q$ that preserves all joins. Then $l$ admits a right adjoint $r: Q \rightarrow P$ given by the formula
$$r(q) = \textstyle \bigsqcup_P \SetComp{p \in P}{l(p)\sqsubseteq_Q q}.$$
\begin{proof}
See Theorem~3.6.9~of~\cite{taylor-pfom}. Consequence of the adjoint functor theorem instantiated for posets.
\end{proof}
\end{proposition}

\begin{body}
Every finite distributive lattice is definable. Propositions~\ref{prop:join-irreducible-antichains},~\ref{prop:finite-cover-property}~and~\ref{prop:boolean-algebras-are-finite} give basic constraints on definability of infinite distributive lattices. All three are well-known and admit straightforward proofs using more advanced model-theoretic machinery (NIP or the Vapnik-Chervonenkis property). However, to the author's best knowledge, the arguments presented below, which use only elementary dimension theory presented in Chapters 1-4 of~\cite{vandendries-tame}, are novel.
\end{body}

\begin{lemma}\label{lemma:irreducible-forks}
Consider a distributive lattice $(H, \sqsubseteq)$. Let $x,y,j \in H$ so that $j \sqsubseteq x \sqcup y$ and $j$ is join-irreducible. Then either $j \sqsubseteq x$ or $j \sqsubseteq y$ holds.
\begin{proof}
Write $j = j \sqcap (x \sqcup y) = (j \sqcap x) \sqcup (j \sqcap y)$, then use join-irreducibility to conclude that one of $j = j \sqcap x$ or $j = j \sqcap y$ must hold.
\end{proof}
\end{lemma}

\begin{proposition}\label{prop:join-irreducible-antichains}
No definable distributive lattice admits an infinite definable antichain of join-irreducible elements.
\begin{proof}
Assume for a contradiction that the definable distributive lattice $(H, \sqsubseteq)$ contains an infinite antichain $A \subseteq H$ of join-irreducibles. Take a cell-decomposition of $H$ partitioning $A$, and extract a one-dimensional subcell $a: (0,1) \hookrightarrow A$. Set
$$I_k = \SetComp{(x_1,\dots,x_k)\in (0,1)^k}{x_1 < x_2<\cdots<x_k}.$$
Define the map $\iota_k : I_k \rightarrow H$ using the formula $$\iota_k(x_1,\dots,x_k) = a(x_1) \sqcup a(x_2) \sqcup \dots \sqcup a(x_k).$$
We claim that $\iota_k$ is injective. Assume that $\iota_k(x_1,\dots,x_k) = \iota_k(y_1,\dots,y_k)$. Then, by Lemma~\ref{lemma:irreducible-forks}, $a(t) \sqsubseteq \iota_k(x_1,\dots,x_k)$ precisely if $a(t) \sqsubseteq a(x_i)$ for some index $i$. Similarly, $a(t) \sqsubseteq a(y_j)$ for some index $j$. Since the image of $a$ is an antichain, $a(t) \sqsubseteq a(x_i)$ only if $a(t) = a(x_i)$, which implies $t = x_i$ using the injectivity of $a$. This means that the sets $\{x_1,\dots,x_k\}$ and $\{y_1,\dots,y_k\}$ coincide, and by definition of $I_k$ so do the tuples $(x_1,\dots,x_k)$ and $(y_1,\dots,y_k)$, showing injectivity of $\iota_k$. From injectivity it follows that $k = \dim I_k \leq \dim H$. Taking $k > \dim H$ leads to a contradiction. 
\end{proof}
\end{proposition}

\begin{proposition}\label{prop:finite-cover-property}
The set $\SetComp{y \in H}{\#[x,y]_\sqsubseteq=2}$ of elements covering $x \in H$ (resp. the set $\SetComp{y \in H}{\#[y,x]_\sqsubseteq=2}$ of elements covered by $x \in H$) is  finite in every definable distributive lattice $(H,\sqsubseteq)$.
\begin{proof}
Consider the distributive lattice obtained by restricting $\sqsubseteq$ to $\SetComp{y \in H}{x \sqsubseteq y}$. In the resulting lattice, the points of $H$ covering $x$ are atoms, and hence join-irreducible. Since atoms are incomparable, they form an antichain. Consequently, by Proposition~\ref{prop:join-irreducible-antichains}, there are only finitely many of them.
\end{proof}
\end{proposition}

\begin{proposition}\label{prop:diamond-split}
Given a definable distributive lattice $(H,\sqsubseteq)$ and a pair of elements $a,b \in H$, we have a (definable) bijection between the $\sqsubseteq$-interval $[a \sqcap b, a \sqcup b]_\sqsubseteq$ and the Cartesian product $[a \sqcap b, a]_\sqsubseteq \times [a \sqcap b, b]_\sqsubseteq$.
\begin{proof}
The definable map $f(x) = (x \sqcap a, x \sqcap b)$ admits the inverse $f^{-1}(x,y) = x \sqcup y$.
\end{proof}
\end{proposition}

\begin{corollary}\label{cor:central-split}
Consider a definable Heyting algebra $(H, \sqsubseteq)$ with an element $h \in H$ satisfying $h \sqcup \neg h = \top$. Then the intervals $[\bot, h]_\sqsubseteq$ and $[\bot, \neg h]_\sqsubseteq$ form Heyting algebras under the order inherited from $H$, and we have a definable isomorphism between $H$ and the direct product of Heyting algebras $[\bot, h]_\sqsubseteq \times [\bot, \neg h]_\sqsubseteq$.
\begin{proof}
It is immediate that $D=[\bot, h]_\sqsubseteq$ and $E=[\bot, \neg h]_\sqsubseteq$ form bounded distributive lattices under the inherited order. Moreover, $x \sqcap y = x \sqcap_D y$ when $x,y \in D$, and similarly for $\sqcap_E$ on $E$.

\vspace{0.5em} \textbf{Claim 1.} Setting $x \Rightarrow_D y = h \sqcap (x \Rightarrow y)$ turns $D$ into a Heyting algebra (and similarly for $E$).
Take $x,y,z \in D$, then argue as follows:
\begin{align*}
    x \sqsubseteq_D y \Rightarrow_D z &\leftrightarrow x \sqsubseteq h \sqcap (y \Rightarrow z) & \\
    &\leftrightarrow x \sqsubseteq y \Rightarrow z & \text{(since $x \in [\bot,h]_\sqsubseteq$)}\\
    &\leftrightarrow x \sqcap y \sqsubseteq z & \\
    &\leftrightarrow x \sqcap_D y \sqsubseteq_D z & \text{(since $\sqcap_D=\sqcap$)}.
\end{align*}

\vspace{0.5em} \textbf{Claim 2.} The definable maps $f$ and $f^{-1}$ of Proposition~\ref{prop:diamond-split} give the required isomorphism of Heyting algebras. Preservation of the lattice operations follows from a routine verification. The fact that $f$ preserves the Heyting structure is immediate from Claim 1. We only need to check that the inverse $f^{-1}$ preserves the Heyting structure as well. But we have
\begin{align*}
    f^{-1}(x \Rightarrow_D y, x \Rightarrow_E y) &= (x \Rightarrow_D y) \sqcup (x \Rightarrow_E y) & \\
    &= (x \Rightarrow_D y) \sqcup (\neg h \sqcap (x \Rightarrow y)) & \\
    &= ((x \Rightarrow_D y) \sqcup \neg h) \sqcap ((x \Rightarrow_D y) \sqcup (x \Rightarrow y)) & \text{(distributivity)}\\
    &= ((x \Rightarrow_D y) \sqcup \neg h) \sqcap (x \Rightarrow y) & \text{(absorption)}\\
    &= ((h \sqcup \neg h) \sqcap (\neg h \sqcup (x \Rightarrow y))) \sqcap (x \Rightarrow y) & \text{(distributivity)}\\
    &= (\neg h \sqcup (x \Rightarrow y)) \sqcap (x \Rightarrow y) & \text{($h$ central)}\\
    &= x \Rightarrow y & \text{(absorption)}\\
\end{align*}
and the result follows.
\end{proof}
\end{corollary}

\begin{proposition}\label{prop:boolean-algebras-are-finite}
Every definable Boolean algebra $(B, \sqsubseteq)$ is finite.
\begin{proof}
Take an infinite definable Boolean algebra $(B, \sqsubseteq)$  of smallest possible dimension. Let $d(x) = \dim [\bot,x]_\sqsubseteq$. Then, by Chapter~4,~Proposition~1.5~of~\cite{vandendries-tame}, $x \mapsto d(x)$ is a definable function.

\vspace{0.5em}\textbf{Claim.} For all $x \in B$, $d(x) = 0$ or $d(\neg x) = 0$. Otherwise we would have some $x \in B$ so that $d(x) > 0$ and $d(\neg x) > 0$. By Corollary~\ref{cor:central-split}, $\dim B = d(x) + d(\neg x)$, so this would split $B$ into the product of two strictly smaller-dimensional infinite Boolean algebras. We chose $(B, \sqsubseteq)$ to have the smallest possible dimension for an infinite Boolean algebra, which makes this impossible.

\vspace{0.5em} Now consider the definable relation
$$R(x,y) \leftrightarrow (d(x) = 0 \wedge y \sqsubseteq x) \vee (d(\neg x) = 0 \wedge y \sqsubseteq \neg x).$$ By Chapter~3, Corollary~3.7~of~\cite{vandendries-tame}, we can find $N \in \mathbb{N}$ so that $\#\SetComp{y \in B}{R(x,y)} \leq N$ for any $x \in B$. Thus, $B$ has finite height, and is therefore finite, contradicting our initial assumption.
\end{proof}
\end{proposition}

\section{Distributive lattices of dimension one}\label{sec:distributive-lattices-of-dim-one}

\begin{body}
In this section, after a brief survey of common examples and constructions, we give a complete description of the structure of one-dimensional bounded distributive lattices definable in o-minimal fields (Theorem~\ref{thm:tame-variation}). In later sections, we apply the structure theory developed here to show that definable one-dimensional distributive lattices embed definably into definable Heyting algebras (Theorem~\ref{thm:birkhoff-embedding}).
\end{body}

\subsection*{Artin gluing}

\begin{definition}\label{def:artin-gluing}
Consider a Heyting algebra $(H,\sqcup_H,\sqcap_H,\Rightarrow_H,\bot_H,\top_H)$, a finite Heyting algebra $(F,\sqcup_F,\sqcap_F,\Rightarrow_F,\bot_F,\top_F)$ and a function $\tau : H \rightarrow F$ satisfying $\tau(\top_H) = \top_F$ and $\tau(a \sqcap_H b) = \tau(a) \sqcap_F \tau(b)$ for any $a,b \in H$. Define
\begin{itemize}
    \item the set $\SetComp{(h,f)\in H\times F}{f \sqsubseteq_F \tau(h)}$, which we denote as $H \ltimes_\tau F$,
    \item the binary operations $\sqcap$ and $\sqcup$ on $H \ltimes_\tau F$ pointwise,
    \item the operation $\Rightarrow$ given by the equation $$(h_1,f_1) \Rightarrow (h_2, f_2) = (h_1 \Rightarrow_H h_2, (f_1 \Rightarrow_F f_2) \sqcap_F \tau(h_1 \Rightarrow_H h_2)).$$
\end{itemize}
We call the resulting structure on $H \ltimes_\tau F$ the \textit{Artin gluing} of the algebras $H$ and $F$ along the map $\tau$.
\end{definition}

\begin{proposition}\label{prop:artin-lattice}
The binary operations of Definition~\ref{def:artin-gluing} have signature $(H \ltimes_\tau F)^2 \rightarrow H\ltimes_\tau F$ and make the set $H \ltimes_\tau F$ into a bounded distributive lattice.
\begin{proof}
We only prove well-definedness of $\sqcup$, since the existence of top and bottom elements, as well as the other well-definedness conditions and bounded distributive lattice properties follow by straightforward algebraic reasoning. Notice the monotonicity of $\tau$: $x \sqsubseteq_H y$ holds precisely if $x \sqcap_H y = x$, and then $\tau(x) \sqcap_F \tau(y) = \tau(x)$ and so $\tau(x) \sqsubseteq_F \tau(y)$. Assume that $H\ltimes_\tau F$ contains $(h_1,f_1)$ and $(h_2, f_2)$, in other words that $f_1 \sqsubseteq_F \tau(h_1)$ and $f_2 \sqsubseteq_F \tau(h_2)$. To show the well-definedness of $\sqcup$, we need to prove that $f_1 \sqcup_F f_2 \sqsubseteq_F \tau(h_1 \sqcup_H h_2)$. By monotonicity of $\tau$ and $h_1 \sqsubseteq_H h_1 \sqcup_H h_2$ we have $f_1 \sqsubseteq_F \tau(h_1) \sqsubseteq_F \tau(h_1 \sqcup_H h_2)$ and similarly $f_2 \sqsubseteq_F \tau(h_2) \sqsubseteq_F \tau(h_1 \sqcup_H h_2)$. Thus, by the property of least upper bounds we get $f_1 \sqcup_F f_2 \sqsubseteq_F \tau(h_1 \sqcup_H h_2)$.
\end{proof}
\end{proposition}

\begin{proposition}\label{prop:artin-algebra}
The Artin gluing of two (definable) Heyting algebras along a (definable) map $\tau$ yields a (definable) Heyting algebra.
\begin{proof}
We use the notations of Definition~\ref{def:artin-gluing}. Definability follows immediately from there. Since we treat the distributive lattice properties in Proposition~\ref{prop:artin-lattice}, we only have to check that the four axioms involving $\Rightarrow$ hold. Given $(h_i,f_i)$ with $f_i \sqsubseteq_F \tau(h_i)$, we verify the properties in order.
\begin{enumerate}
    \item $(h_1, f_1) \Rightarrow (h_1, f_1) = \top$: we need to check that $(f_1 \Rightarrow_F f_1) \sqcap_F \tau(h_1 \Rightarrow_H h_1) = \top_F$, which follows since $\top_F \sqcap_F \tau(h_1 \Rightarrow_H h_1) = \tau(h_1 \Rightarrow_H h_1) = \tau(\top_H) = \top_F$.
    \item $(h_1, f_1) \sqcap ((h_1,f_1) \Rightarrow (h_2, f_2)) = (h_1,f_1) \sqcap (h_2,f_2)$: again, we only need to check that the second coordinates agree. This means $f_1\sqcap_F (f_1 \Rightarrow_F f_2) \sqcap \tau(h_1 \Rightarrow_H h_2) = f_1 \sqcap f_2$. But we have $f_2 \sqsubseteq_F \tau(h_2) \sqsubseteq \tau(h_1 \Rightarrow_H h_2)$ by the monotonicity of $\tau$ and $h_2 \sqsubseteq_H h_1 \Rightarrow h_2$. Thus, $f_2 \sqcap \tau(h_1 \Rightarrow_H h_2) = f_2$ and the result follows.
    \item $(h_2, f_2) \sqcap ((h_1,f_1) \Rightarrow (h_2, f_2)) = (h_2,f_2)$: follows from the monotonicity of $\tau$, similarly to the previous identity.
    \item $(h_1,f_1) \Rightarrow ((h_2,f_2) \sqcap (h_3,f_3)) = ((h_1,f_1) \Rightarrow (h_2,f_2)) \sqcap((h_1,f_1) \Rightarrow (h_3,f_3))$: once again we work only on the second coordinate, which reduces this to the same distributivity for $\Rightarrow_F$ and $\Rightarrow_H$, along with $\tau(x \sqcap_H y) = \tau(x) \sqcap_F \tau(y)$.
\end{enumerate}
\end{proof}
\end{proposition}

\begin{example}
Many more mundane constructions arise as special cases of Artin gluing. For example, given two definable Heyting algebras $H$ and $F$, and taking the definable map $\tau(x) = \top_F$, the Artin gluing $H \ltimes_\tau F$ coincides with the product $H\times F$.
\end{example}

\begin{body}\label{body:gluing-in-one-dim}
Consider a definable gluing map $\tau: H \rightarrow F$ between a one-dimensional definable Heyting algebra $H$ and a finite Heyting algebra $F$. Then $H \ltimes_\tau F$ has infinitely many elements, and as a definable subset $H \times F$, satisfies $1 \leq \dim (H \ltimes_\tau F) \leq \dim H + \dim F = 1 + 0 = 1$. Proposition~\ref{prop:artin-algebra} provides a rich source of definable Heyting algebras in dimension one. For example, taking $([0,1], \leq)$ as a one-dimensional bounded total order, every finite Heyting algebra $F$ and definable monotone map $\tau : [0,1] \rightarrow F$ with $\tau(1) = \top_F$ immediately gives a one-dimensional Heyting algebra $[0,1] \ltimes_\tau F$. Artin gluing also shows that, unlike in a finite Heyting algebra, in a one-dimensional Heyting algebra one cannot always write every element as a finite join of join-irreducibles: consider $[0,1] \ltimes_\tau \{0,1\}$ with the map $\tau$ that sends $0$ to $0$ and every other element to $1$.
\end{body}

\subsection*{Cohesive towers}

\begin{body}
One can turn every finite distributive lattice $(F,\sqcap_F,\sqcup_F,\bot_F,\top_F)$ into a finite Heyting algebra in exactly one way, by setting $x \Rightarrow_F y = \bigsqcup_F \SetComp{z \in F}{x \sqcap_F z \sqsubseteq_F y}$. In general, the same construction fails for infinite bounded distributive lattices (even \textit{complete} distributive lattices where the supremum on the right always exists). As Example~\ref{example:non-heyting-complete-lattice} shows, such examples exist among definable distributive lattices already in dimension one.
\end{body}

\begin{example}\label{example:non-heyting-complete-lattice}
Consider the definable set $H = (\left[0,1\right) \times \{0,1\}) \cup \{(1,1)\}$. Equip $H$ with the pointwise operations
\begin{align*}
    & \sqcap : H^2 \rightarrow H \\
    & (x_1,y_1) \sqcap (x_2,y_2) = (\min\{x_1,x_2\},\min\{y_1,y_2\}) \\
    & ~ \\
    & \sqcup : H^2 \rightarrow H \\
    & (x_1,y_1) \sqcup (x_2,y_2) = (\max\{x_1,x_2\},\max\{y_1,y_2\}).
\end{align*}
Setting $\top = (1,1)$, $\bot=(0,0)$ we obtain a bounded distributive lattice $(H,\sqcap,\sqcup,\bot,\top)$. For each pair of elements $x,y \in H$, the set $\SetComp{z \in H}{x \sqcap z \sqsubseteq y}$ has a supremum. However, defining $x \Rightarrow y$ as that supremum, the structure $(H,\sqcap,\sqcup,\Rightarrow,\bot,\top)$ does not satisfy the Heyting algebra axioms.
\begin{proof}
Compute $(0,1) \Rightarrow (0,0)$. Since $(0,1) \sqcap (a,b) \sqsubseteq (0,0)$ precisely if $\min\{0,a\} = 0$ (which always holds) and $\min\{1,b\} = b = 0$, the set $\SetComp{(a,b) \in H}{(0,1) \sqcap (a,b) \sqsubseteq (0,0)}$ contains elements of the form $(x,0)$ for arbitrarily large $x \in [0,1)$. Thus, it has to have supremum $(0,1) \Rightarrow (0,0) = (1,1)$. But then $$ (0,1) \sqcap ((0,1) \Rightarrow (0,0)) = (0,1) \sqcap (1,1) = (0,1) $$
while $(0,1) \sqcap (0,0) = (0,0)$, so the law $x \sqcap (x \Rightarrow y) = x \sqcap y$ cannot hold.
\end{proof}
\end{example}

\begin{body}
Artin gluings of the chain $([0,1], \leq)$ onto finite algebras give rise to a rich family of one-dimensional infinite Heyting algebras. The resulting algebras, when regarded as distributive lattices, always admit a projection to $([0,1],\leq)$. In fact, the attentive reader will have noticed that all examples of one-dimensional distributive lattices covered so far admit such a projection. This motivates the construction of \textit{cohesive towers} (Lemma~\ref{lemma:cohesive-tower}), bounded distributive lattices that have simple combinatorial descriptions (the \textit{cohesive blueprints} of Definition~\ref{def:cohesive-blueprint}) in terms of projection to a one-dimensional linear order.
\end{body}

\begin{definition}\label{def:cohesive-blueprint}
A \textit{cohesive blueprint} consists of
\begin{itemize}
    \item positive integers $n$ and $N$,
    \item a definable subset $M \subseteq R$ that has a minimum and maximum element,
    \item a decomposition of $M$ into points and intervals $M_1 < \dots < M_n$,
    \item for each $i \in \{1,\dots,n\}$, a finite distributive lattice structure $(H_i, \sqsubseteq_i)$ where $H_i \subseteq \{1,\dots,N\}$,
    \item for each $i \leq j \in \{1,\dots,n\}$, a morphism of bounded-below distributive lattices $l_{i,j} : H_i \rightarrow H_j$ (called the \textit{gluing morphisms}),
\end{itemize}
subject to the following \textit{cohesion conditions}:
\begin{enumerate}
    \item for each $i\leq j\leq k$, the gluing morphisms satisfy $l_{i,k} = l_{j,k} \circ l_{i,j}$, so that in particular $l_{i,i} \circ l_{i,i} = l_{i,i}$; and
    \item for each $i \leq j$, the map $r_{j,i} : H_j \rightarrow H_i$ defined by $r_{j,i}(x)= \bigsqcup_i \SetComp{y \in H_i}{l_{i,j}(y) \sqsubseteq_j x}$, satisfies the cohesion identities $$r_{j,i}(l_{i,j}(p)) = p \sqcup_i r_{j,i}(\bot_j) \text{ and } l_{i,j}(r_{j,i}(q)) = q \sqcap_j l_{i,j}(\top_i)$$ for all $p \in H_i$, $q \in H_j$.
\end{enumerate}
\end{definition}

\begin{lemma}\label{lemma:cohesive-tower}
Take a cohesive blueprint as in Definition~\ref{def:cohesive-blueprint}. Identify $M \subseteq R$ with $M'=\SetComp{(i,m) \in \{1,\dots,n\} \times M}{m \in M_i}$, and using this identification define the set $$H=\SetComp{(i,m,p) \in M' \times \{1,\dots,N\}}{p \in H_i}.$$
Assuming (w.l.o.g.) that $m_1 \leq m \in R$, define the commutative operations
$$
(i,m_1,p) \sqcup (j,m,q) = \begin{cases}
    (j,m,p \sqcup_i q) & \text{if }m_1 = m \\
    (j,m,l_{i,j}(p) \sqcup_j q) & \text{otherwise}
\end{cases}
$$
and
$$
(i,m_1,p) \sqcap (j,m,q) = \begin{cases}
    (i,m_1,p \sqcap_i q) & \text{if }m_1 = m \\
    (i,m_1,p \sqcap_i r_{j,i}(q)) & \text{otherwise}
\end{cases}
$$
Setting $\bot = (1,\min M, \bot_1)$ and $\top = (n,\max M, \top_n)$, call the structure $(H,\sqcap,\sqcup,\bot,\top)$ the \textit{cohesive tower} determined by the cohesive blueprint. Then $(H, \sqcap, \sqcup, \bot, \top)$ is a definable bounded distributive lattice.
\begin{proof}
The description contains finitely many fiber structures and hence finitely many possible gluing morphisms between them. Hence all of these are definable, and the definability of the structure on $H$ follows.

We first prove that for each $i \leq j$, the map $r_{j,i}: H_j \rightarrow H_i$ is a morphism of bounded-above distributive lattices. By Proposition~\ref{prop:adjoint-functor-thm}, $r_{j,i}$ is right adjoint to $l_{i,j}$, and hence preserves $\sqcap_j$ and $\top_j$. We need to demonstrate that $r_{j,i}$ preserves joins. Note that we have
\begin{align*}
    l_{i,j}(r_{j,i}(x \sqcup_j y))
    &= (x \sqcup_j y) \sqcap_j l_{i,j}(\top_i) & (\text{cohesion}) \\
    &= (x \sqcap_j l_{i,j}(\top_i)) \sqcup_j (y \sqcap_j l_{i,j}(\top_i)) & (\text{distributivity}) \\
    &= l_{i,j}(r_{j,i}(x)) \sqcup_j l_{i,j}(r_{j,i}(y)) &  (\text{cohesion}) \\
    &= l_{i,j}(r_{j,i}(x) \sqcup_i r_{j,i}(y)) &  (l_{i,j} \text{ homomorphism}) \\
\end{align*}
for any $x,y \in H_j$. Thus, $l_{i,j}(r_{j,i}(x \sqcup_j y)) \sqsubseteq_j l_{i,j}(r_{j,i}(x) \sqcup_i r_{j,i}(y))$. Using the adjunction between $l_{i,j}$ and $r_{j,i}$ we get that
\begin{align*}
    r_{j,i}(x \sqcup_j y) 
    &\sqsubseteq_i r_{j,i}(l_{i,j}(r_{j,i}(x) \sqcup_i r_{j,i}(y))) &  \\
    &= r_{j,i}(x) \sqcup_i r_{j,i}(y) \sqcup_i r_{j,i}(\bot_j) & (\text{cohesion}) \\
    &= r_{j,i}(x) \sqcup_i r_{j,i}(y). & (r_{j,i}(\bot)\sqsubseteq_i r_{j,i}(y))
\end{align*}
Similarly we have $l_{i,j}(r_{j,i}(x) \sqcup_i r_{j,i}(y)) \sqsubseteq_j l_{i,j}(r_{j,i}(x \sqcup_j y))$
and hence
\begin{align*}
    r_{j,i}(x) \sqcup_i r_{j,i}(y)
    &\sqsubseteq_i r_{j,i}(l_{i,j}(r_{j,i}(x \sqcup_j y))) &  \\
    &= r_{j,i}(x \sqcup_j y) \sqcup_i r_{j,i}(\bot_j) & (\text{cohesion}) \\
    &= r_{j,i}(x \sqcup_j y) & (r_{j,i}(\bot)\sqsubseteq_i r_{j,i}(x \sqcup_j y))
\end{align*}
meaning that $r_{j,i}(x \sqcup_j y) = r_{j,i}(x) \sqcup_i r_{j,i}(y)$ as claimed.

One can check by routine calculations that $(H,\sqcup,\bot,\top)$ constitutes a bounded join-semilattice. As an example we write out the most difficult case, associativity for three elements $(1,m_1,p_1) \sqsubseteq (2,m_2,p_2) \sqsubseteq (3,m_3,p_3)$. We choose these specific integers to denote the fibers purely to simplify our notation, we do not use any property apart from $1 < 2 < 3$. On one side 
\begin{align*}
    ((1,m_1,p_1) \sqcup (2,m_2,p_2)) \sqcup (3,m_3,p_3)
    &= (2,m_2,l_{1,2}(p_1)\sqcup_2 p_2) \sqcup (3,m_3,p_3) &  \\
    &= (3,m_3,l_{2,3}(l_{1,2}(p_1)\sqcup_2 p_2) \sqcup_3 p_3) &  \\
    &= (3,m_3,l_{2,3}(l_{1,2}(p_1))\sqcup_3 l_{2,3}(p_2) \sqcup_3 p_3) &  \\
    &= (3,m_3,l_{1,3}(p_1)\sqcup_3 l_{2,3}(p_2) \sqcup_3 p_3)
\end{align*}
using the gluing identity $l_{1,3} = l_{2,3} \circ l_{1,2}$ and $l_{2,3}$ being a homomorphism. On the other side,
\begin{align*}
    (1,m_1,p_1) \sqcup ((2,m_2,p_2) \sqcup (3,m_3,p_3))
    &= (1,m_1,p_1) \sqcup (3,m_3,l_{2,3}(p_2) \sqcup_3 p_3) &  \\
    &= (3,m_3,l_{1,3}(p_1)\sqcup_3 l_{2,3}(p_2) \sqcup_3 p_3).
\end{align*}
Notice that $r_{j,i}\circ r_{k,j}$ and $ r_{k,i}$ are both right adjoints of $l_{i,k}$. Since right adjoints are unique, we have $r_{j,i}\circ r_{k,j} = r_{k,i}$ and the identities for $(H, \sqcap, \bot, \top)$ follow by dual arguments.

This leaves checking distributivity. Again, we give the representative difficult cases: the others require only minor variations. Consider four elements $$(1,m_1,p_1) \sqsubseteq (2,m_2,p_2) \sqsubseteq (3,m_3,p_3) \sqsubseteq (4,m_4,p_4).$$ Note that we have $(2,m_2,p_2) \sqcup (3,m_3,p_3) = (3,m_3,l_{2,3}(p_2) \sqcup p_3)$.

\vspace{0.5em} \textbf{Case 1.} $(1,m_1,p_1) \sqcap ((2,m_2,p_2) \sqcup (3,m_3,p_3))$. We need only compute the last coordinate. In this case we need to prove
$$ p_1 \sqcap_1 r_{3,1}(l_{2,3}(p_2) \sqcup_3 p_3) = (p_1 \sqcap_1 r_{2,1}(p_2)) \sqcup_1 (p_1 \sqcap_1 r_{3,1}(p_3)) $$
and applying $r_{2,1} \circ r_{3,2} = r_{3,1}$, combined with the join-preservation of $r_{2,1}$ and the cohesion identities, we get
\begin{align*}
  p_1 \sqcap_1 r_{3,1}(l_{2,3}(p_2) \sqcup_3 p_3)
  &= p_1 \sqcap_1 ( r_{3,1}(l_{2,3}(p_2)) \sqcup_1 r_{3,1}(p_3) )  \\
  &= p_1 \sqcap_1 ( r_{2,1}(r_{3,2}(l_{2,3}(p_2))) \sqcup_1 r_{3,1}(p_3) )  \\
  &= p_1 \sqcap_1 ( r_{2,1}(p_2\sqcup_2r_{3,2}(\bot_3)) \sqcup_1 r_{3,1}(p_3) )  \\
  &= p_1 \sqcap_1 ( r_{2,1}(p_2) \sqcup_1 r_{2,1}(r_{3,2}(\bot_3)) \sqcup_1 r_{3,1}(p_3) )  \\
  &= p_1 \sqcap_1 ( r_{2,1}(p_2) \sqcup_1 r_{3,1}(\bot_3) \sqcup_1 r_{3,1}(p_3) )  \\
  &= p_1 \sqcap_1 ( r_{2,1}(p_2) \sqcup_1 r_{3,1}(p_3) )  \\
  &= (p_1 \sqcap_1 r_{2,1}(p_2)) \sqcup_1 (p_1 \sqcap_1 r_{3,1}(p_3))
\end{align*}
as required.

\vspace{0.5em} \textbf{Case 2.} $(2,m_2,p_2) \sqcap ((1,m_1,p_1) \sqcup (3,m_3,p_3))$. Again, we compute the last coordinate. In this case we have the goal
$$p_2 \sqcap_2 r_{3,2}(l_{1,3}(p_1) \sqcup_3 p_3) = l_{1,2}(r_{2,1}(p_2) \sqcap_1 p_1) \sqcup_2 (p_2 \sqcap_2 r_{3,2}(p_3)) $$
and the calculation
\begin{align*}
  p_2 \sqcap_2 r_{3,2}(l_{1,3}(p_1) \sqcup_3 p_3)
  &= p_2 \sqcap_2 ( r_{3,2}(l_{1,3}(p_1)) \sqcup_2 r_{3,2}(p_3) )  \\
  &= p_2 \sqcap_2 ( r_{3,2}(l_{2,3}(l_{1,2}(p_1))) \sqcup_2 r_{3,2}(p_3) )  \\
  &= p_2 \sqcap_2 ( l_{1,2}(p_1) \sqcup_2 r_{3,2}(\bot_3) \sqcup_2 r_{3,2}(p_3) )  \\
  &= p_2 \sqcap_2 ( l_{1,2}(p_1) \sqcup_2 r_{3,2}(p_3) )  \\
  &= (p_2 \sqcap_2 l_{1,2}(p_1)) \sqcup_2 (p_2 \sqcap_2 r_{3,2}(p_3))
\end{align*}
combined with
\begin{align*}
  l_{1,2}(r_{2,1}(p_2) \sqcap_1 p_1) \sqcup_2 (p_2 \sqcap r_{3,2}(p_3))
  &= (l_{1,2}(r_{2,1}(p_2)) \sqcap_2 l_{1,2}(p_1)) \sqcup_2 (p_2 \sqcap_2 r_{3,2}(p_3))  \\
  &= ( (p_2 \sqcap_2 l_{1,2}(\top_1) \sqcap_2 l_{1,2}(p_1)) \sqcup_2 (p_2 \sqcap r_{3,2}(p_3))  \\
  &= (p_2 \sqcap_2 l_{1,2}(p_1)) \sqcup_2 (p_2 \sqcap_2 r_{3,2}(p_3))
\end{align*}
yields the equality of the two sides.
\end{proof}
\end{lemma}

\begin{body}
At this point, it is a useful exercise to prove that if $F$ is a finite Heyting algebra, then each Artin gluing of the form $[0,1] \ltimes_\tau F$ is isomorphic to a cohesive tower over a cohesive blueprint. The reader familiar with the Hall-Dilworth construction may also prove that it arises as a special case of cohesive towers by taking $M=\{0,1\}$.
\end{body}

\subsection*{Principal chain}

\begin{body}
Having seen some examples of bounded distributive lattices and Heyting algebras in dimension one, 
we now develop the combinatorial structure theory of one-dimensional distributive lattices and classify them up to isomorphism. In Theorem~\ref{thm:tame-variation}, we will show that \textit{every} one-dimensional distributive lattice is described by a cohesive blueprint. To do this, we set up a definable projection of the lattice onto a homogeneous linear order (the \textit{principal chain}), then prove that the different fibers of this projection can interact with each other only in very constrained ways.
\end{body}

\begin{definition}\label{def:principal-chain}
Consider a definable distributive lattice $(H,\sqsubseteq)$ with $\dim H = 1$. We call the definable set
$$C_H = \SetComp{x \in H}{\forall y\sqsubset x.\dim[y,x]_\sqsubseteq = 1}$$
the \textit{principal chain of the lattice $H$}.
\end{definition}

\begin{lemma}\label{lemma:principal-chain-is-chain}
In a definable distributive lattice $(H,\sqsubseteq)$ with $\dim H = 1$, the principal chain $C_H$ of $H$ is linearly ordered by $\sqsubseteq$.
\begin{proof}
Take $x,y \in C_H$. If $x \sqcap y \neq x$ and $x \sqcap y \neq y$, then by the defining property of $C_H$ we get that $\dim [x \sqcap y,x]_\sqsubseteq=1$ and $\dim [x \sqcap y,y]_\sqsubseteq=1$. Proposition~\ref{prop:diamond-split} gives a definable (hence dimension-preserving) isomorphism
$$[x \sqcap y, x \sqcup y]_\sqsubseteq \cong [x \sqcap y,x]_\sqsubseteq \times[x \sqcap y,y]_\sqsubseteq.$$
By Chapter~4,~Corollary~1.6(iii)~of~\cite{vandendries-tame}, $$\dim ([x \sqcap y,x]_\sqsubseteq \times[x \sqcap y,y]_\sqsubseteq) = \dim [x \sqcap y,x]_\sqsubseteq + \dim [x\sqcap y,y]_\sqsubseteq = 2,$$
which gives $1 = \dim H \geq \dim [x \sqcap y, x \sqcup y]_\sqsubseteq = 2$, a contradiction.
\end{proof}
\end{lemma}

\begin{body}
In Corollary~\ref{cor:principal-right-adjoint} below we show that the inclusion map $\iota: C_H \hookrightarrow H$, which constitutes a homomorphism of bounded-below distributive lattices for any definable distributive lattice $(H,\sqsubseteq)$ with $\dim H = 1$, admits a right adjoint. It follows (Corollary~\ref{cor:principal-chain-homogeneous}) that $(C_H, \sqsubseteq)$ is always a homogeneous one-dimensional linear order. 
\end{body}

\begin{proposition}\label{prop:calh-is-finite}
For any $x\in H$, the definable set $\mathcal{H}_x = \SetComp{h \in H}{h \sqsubseteq x \wedge \dim [h,x]_\sqsubseteq = 0}$ has only finitely many elements in any definable distributive lattice $(H,\sqsubseteq)$ satisfying $\dim H = 1$.
\begin{proof}
For each $h \in \mathcal{H}_x$, the definable set $[h,x]_\sqsubseteq$ is finite. Use the uniform finiteness property (Chapter~3, Lemma 2.13~of~\cite{vandendries-tame}) to find a single $N \in \mathbb{N}$ so that $\#[h,x]_\sqsubseteq \leq N$ for any $h$. Stratify $\mathcal{H}_x$ into $N$ layers, $\mathcal{H}_{x,0}$ to $\mathcal{H}_{x,N-1}$ as follows:
\begin{enumerate}
    \item $\mathcal{H}_{x,0} = \{x\}$,
    \item for $k \leq N-2$, $\mathcal{H}_{x,k+1} = \SetComp{h \in \mathcal{H}_x}{\exists h'\in\mathcal{H}_{x,k} \text{ s.t. } h' \text{ covers } h}$
\end{enumerate}
and since $\#[h,x]_\sqsubseteq \leq N$ every $h \in \mathcal{H}_x$ belongs to $\mathcal{H}_{x,k}$ for some $k < N$. Assume for a contradiction that $\mathcal{H}_x$ has infinitely many elements. Thus, at least one of the stratification sets $\mathcal{H}_{x,1}, \dots, \mathcal{H}_{x,N-1}$ also has infinitely many elements. Call the index of the least such element $K+1$. Then some element of $\mathcal{H}_{x,K}$ necessarily covers infinitely many elements of $\mathcal{H}_{x,K+1}$, which contradicts Proposition~\ref{prop:finite-cover-property}.
\end{proof}
\end{proposition}

\begin{lemma}\label{lemma:principal-projection}
In a definable distributive lattice $(H,\sqsubseteq)$ with $\dim H = 1$, for each $x \in H$ we can find a unique $c_x \in C_H$ so that $\dim [c_x, x]_\sqsubseteq = 0$ and $\forall c \in C_H. c \sqsubseteq x \leftrightarrow c \sqsubseteq c_x$. Moreover, the map $x \mapsto c_x$ is a definable function.
\begin{proof}
Without loss of generality assume that $x \not\in C_H$. In Proposition~\ref{prop:calh-is-finite} we have shown that the set $\mathcal{H}_x$ has finitely many elements for any $x \in H$. Thus, we can find some $\sqsubseteq$-minimal element $m \in \mathcal{H}_x$. Assume for a contradiction that $m \not\in C_H$. This means that $\neg \forall y \sqsubset m. \dim [y,m]_\sqsubseteq = 1$, or in other words $\exists y \sqsubset m. \dim [y,m]_\sqsubseteq = 0$. Taking such a $y \in H$, we get that $y \sqsubset m \sqsubseteq x$, and using the fact that $[y,x]_\sqsubseteq$ embeds definably into $[y,m]_\sqsubseteq \times [m,x]_\sqsubseteq$ we conclude $\dim [y,x]_\sqsubseteq \leq \dim ([y,m]_\sqsubseteq \times [m,x]_\sqsubseteq) = 0$, and so $y \in \mathcal{H}_x$. This contradicts the $\sqsubseteq$-minimality of $m$ in $\mathcal{H}_x$. Thus, $m \in \mathcal{H}_x \cap C_H$.

By Lemma~\ref{lemma:principal-chain-is-chain}, the relation $\sqsubseteq$ linearly orders the set $\mathcal{H}_x \cap C_H$. From Proposition~\ref{prop:calh-is-finite} we know that $\mathcal{H}_x \cap C_H$ is finite, and we have just shown that it is non-empty. Therefore, $\mathcal{H}_x \cap C_H$ has some $\sqsubseteq$-maximum element $c_x$, which clearly satisfies the given requirements. 

The element $c_x$ is the unique maximal element of $\mathcal{H}_x \cap C_H$, and $\mathcal{H}_x$ admits a definition uniform in $x$ by the usual fiber dimension definability argument (Proposition~1.5~in~Chapter~4~of~\cite{vandendries-tame}). Thus, $x \mapsto c_x$ is definable as claimed.
\end{proof}
\end{lemma}

\begin{proposition}\label{prop:principal-finite-preimages}
Take a definable bounded distributive lattice $(H, \sqsubseteq)$ with $\dim H = 1$, and let $c_\bullet : H \rightarrow C_H$ denote the definable projection map $x \mapsto c_x$ constructed in Lemma~\ref{lemma:principal-projection}. The preimage $c^{-1}_\bullet(c)$ has finitely many elements for every $c \in C_H$.
\begin{proof}
If $c_x = c$, then by construction $c \in \mathcal{H}_x \cap C_H$, so in particular $\dim[c,x] = 0$. Thus, by the uniform finiteness property (Chapter~3, Lemma 2.13~of~\cite{vandendries-tame}), we can find a single $N \in \mathbb{N}$ so that for any $x \in c^{-1}_\bullet(c)$ we have $\#[c,x] \leq N$. A stratification argument similar to that in the proof of Proposition~\ref{prop:calh-is-finite} shows that $c^{-1}_\bullet(c)$ is finite.
\end{proof}
\end{proposition}

\begin{lemma}\label{lemma:principal-chain-prime}
Take a definable distributive lattice $(H,\sqsubseteq)$ with $\dim H = 1$ and some element $c \in C_H$ in its principal chain. For any $a,b \in H$, if $c \sqsubseteq a \sqcup b$, then $c \sqsubseteq a$ or $c \sqsubseteq b$.
\begin{proof}
Assume for a contradiction that $c \sqsubseteq a \sqcup b$, and yet both $c \sqsubseteq a$ and $c \sqsubseteq b$ fail, or equivalently that $a \sqcap c \neq c$ and $b \sqcap c \neq c$. Since $c \in C_H$, the $\sqsubseteq$-intervals $[a \sqcap c, c]_\sqsubseteq$ and $[b \sqcap c, c]_\sqsubseteq$ both have dimension one. Moreover, the elements $a \sqcap c$ and $b \sqcap c$ are incomparable, since $(a \sqcap c) \sqcup (b \sqcap c) = c \neq a \sqcap c$. Proposition~\ref{prop:diamond-split} gives
$$[a \sqcap b \sqcap c, c]_\sqsubseteq = [(a \sqcap c) \sqcap (b \sqcap c), c]_\sqsubseteq \cong [a \sqcap b \sqcap c, a \sqcap c]_\sqsubseteq \times [b \sqcap a \sqcap c, b \sqcap c]_\sqsubseteq.$$
Since $[a \sqcap b \sqcap c, a \sqcap c]_\sqsubseteq \cong [b \sqcap c, c]_\sqsubseteq$ and $[b \sqcap a \sqcap c, b \sqcap c]_\sqsubseteq \cong [a \sqcap c, c]_\sqsubseteq$ hold definably, 
and definable isomorphisms preserve dimension, we get that $\dim [a \sqcap b \sqcap c, a \sqcap c]_\sqsubseteq = 1$ and $\dim [b \sqcap a \sqcap c, b \sqcap c]_\sqsubseteq = 1$, and thus, by Chapter~4,~Corollary~1.6(iii)~of~\cite{vandendries-tame},
$$ \dim [a \sqcap b \sqcap c, c]_\sqsubseteq = \dim [a \sqcap b \sqcap c, a \sqcap c]_\sqsubseteq + \dim [b \sqcap a \sqcap c, b \sqcap c]_\sqsubseteq = 2,$$
which contradicts $\dim H = 1$.
\end{proof}
\end{lemma}

\begin{proposition}\label{prop:principal-projection-joins}
In each definable bounded distributive lattice $(H,\sqsubseteq)$ with $\dim H = 1$, the projection map $c_\bullet : H \rightarrow C_H$ is a homomorphism of bounded-below distributive lattices to the structure $(C_H, \sqsubseteq)$.
\begin{proof}
Routine arguments show that the map preserves meets and $\bot$. Here we prove preservation of joins. Take any $x,y \in H$. Observe that $c_x \in C_H$ and $c_x \sqsubseteq x \sqsubseteq x \sqcup y$, so by Lemma~\ref{lemma:principal-projection}, $c_x \sqsubseteq c_{x \sqcup y}$ always holds. Similarly we always have $c_y \sqsubseteq c_{x \sqcup y}$.

It suffices to show that for any $c \in C_H$, the inequality $c \sqsubseteq c_x \sqcup c_y$ holds precisely if $c \sqsubseteq c_{x \sqcup y}$ does.

So first assume that $c \sqsubseteq c_x \sqcup c_y$. From this assumption, Lemma~\ref{lemma:principal-chain-prime} gives that $c \sqsubseteq c_x$ or $c \sqsubseteq c_y$, and by the observation above, we get $c \sqsubseteq c_{x \sqcup y}$ in either case.

Now assume that $c \sqsubseteq c_{x \sqcup y}$. By Lemma~\ref{lemma:principal-projection}, we have $c \sqsubseteq x \sqcup y$, and so by Lemma~\ref{lemma:principal-chain-prime} we have either $c \sqsubseteq x$ or $c \sqsubseteq y$. Going through Lemma~\ref{lemma:principal-projection} again, we get $c \sqsubseteq c_x \sqsubseteq c_x \sqcup c_y$ or $c \sqsubseteq c_y \sqsubseteq c_x \sqcup c_y$ as required. It follows that $c_{x\sqcup y} = c_x \sqcup c_y$.
\end{proof}
\end{proposition}

\begin{corollary}\label{cor:principal-right-adjoint}
In a definable bounded distributive lattice $(H,\sqsubseteq)$ whose underlying set $H$ satisfies $\dim H = 1$, all of the following hold:
\begin{enumerate}
    \item the principal chain $C_H$ is a bounded-below distributive lattice,
    \item the inclusion homomorphism of bounded-below distributive lattices $\iota: C_H \hookrightarrow H$ admits the definable right adjoint $c_\bullet : H \rightarrow C_H$, and
    \item the preimage $c^{-1}_\bullet(x) \subseteq H$ is a finite distributive sublattice for each $x \in C_H$.
\end{enumerate}
\end{corollary}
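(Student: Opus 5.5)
The plan is to read off all three items from Lemma~\ref{lemma:principal-chain-is-chain}, Lemma~\ref{lemma:principal-projection}, Proposition~\ref{prop:principal-projection-joins} and Proposition~\ref{prop:principal-finite-preimages}, with almost no new computation.

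For item~1, $C_H$ is linearly ordered by $\sqsubseteq$ (Lemma~\ref{lemma:principal-chain-is-chain}). Any chain is a distributive lattice under $\min/\max$, so it remains to check that these agree with $\sqcap,\sqcup$ of $H$ and that $\bot_H \in C_H$.
\begin{itemize}
\item For $x,y\in C_H$, comparability gives $x \sqcap y \in\{x,y\}$ and $x\sqcup y \in \{x,y\}$. So $C_H$ is closed under the operations of $H$, and $\iota$ preserves $\sqcap$ and $\sqcup$.
\item $\bot_H\in C_H$ holds vacuously, since no $y \sqsubset \bot_H$ exists.
\end{itemize}
Hence $C_H$ is a bounded-below distributive lattice and $\iota$ is a homomorphism of such.

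For item~2, the adjunction condition is: for all $c \in C_H$ and $x\in H$,
$$\iota(c)\sqsubseteq x \iff c \sqsubseteq c_x.$$
This is exactly the clause $\forall c\in C_H.\ c\sqsubseteq x \leftrightarrow c\sqsubseteq c_x$ of Lemma~\ref{lemma:principal-projection}. Definability of $x\mapsto c_x$ is also part of that lemma.

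For item~3, finiteness of $c_\bullet^{-1}(x)$ is Proposition~\ref{prop:principal-finite-preimages}. Closure under $\sqcap$ and $\sqcup$ follows from Proposition~\ref{prop:principal-projection-joins}: if $c_a=c_b=x$, then $c_{a\sqcup b}=c_a\sqcup c_b = x$ and $c_{a\sqcap b}=c_a\sqcap c_b=x$. A sublattice of a distributive lattice is distributive, so the fibre is a finite distributive sublattice (with bottom $x$ itself, since $c_x=x$ for $x\in C_H$).

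The only point needing care is meet preservation of $c_\bullet$, which Proposition~\ref{prop:principal-projection-joins} dismisses as routine. Should it need justification, the argument is that $c_\bullet$ is a right adjoint, by item~2, and right adjoints preserve all existing meets. So $c_{a\sqcap b}$ is the meet of $c_a$ and $c_b$ in $C_H$, which is their minimum and coincides with $c_a\sqcap c_b$ in $H$ by item~1. I expect no real obstacle; the corollary is a repackaging of the preceding results.
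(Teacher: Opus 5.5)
Your proposal is correct and is essentially the paper's approach: the paper gives no separate proof and treats the corollary as an immediate repackaging of Lemma~\ref{lemma:principal-chain-is-chain}, Lemma~\ref{lemma:principal-projection}, Proposition~\ref{prop:principal-finite-preimages} and Proposition~\ref{prop:principal-projection-joins}, exactly as you assemble them. Your adjunction argument for meet preservation of $c_\bullet$ correctly fills in the step the paper calls routine, and it is not circular because item~2 depends only on Lemma~\ref{lemma:principal-projection}.
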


\begin{corollary}\label{cor:principal-chain-homogeneous}
The principal chain $(C_H,\sqsubseteq)$ of a definable bounded distributive lattice $(H,\sqsubseteq)$ is homogeneous as a linear order.
\begin{proof}
Take any elements $x,y \in C_H$ satisfying $x \sqsubset y$. We prove that $\dim (C_H \cap [x,y]_\sqsubseteq) = 1$. By definition of the principal chain, $\dim [x,y]_\sqsubseteq = 1$. Proposition~\ref{prop:principal-projection-joins} ensures that the function $c_\bullet$ preserves order. Thus, for any $z \in [x,y]_\sqsubseteq$, we have $x = c_x \sqsubseteq c_z \sqsubseteq c_y = y$, and therefore $c_\bullet([x,y]_\sqsubseteq) \subseteq C_H \cap [x,y]_\sqsubseteq$. By Proposition~\ref{prop:principal-finite-preimages}, the fibers of the function $c_\bullet$ are finite, and by Corollary 1.6.(ii) of Chapter 4 in \cite{vandendries-tame}, $\dim (C_H \cap [x,y]_\sqsubseteq) \geq \dim c_\bullet([x,y]_\sqsubseteq) = \dim [x,y]_\sqsubseteq = 1$ as well.
\end{proof}
\end{corollary}

\subsection*{Fibers of the principal chain}

\begin{proposition}\label{prop:fiber-uniform-bound}
Take a definable bounded distributive lattice $(H,\sqsubseteq)$  with $\dim H = 1$. Let  $c_\bullet : H \rightarrow C_H$ denote the right adjoint to the inclusion map arising from $C_H \subseteq H$. There exists a single $N \in \mathbb{N}$ so that the preimage $c_\bullet^{-1}(c)$ has at most $N$ elements for any $c \in C_H$.
\begin{proof}
Immediate from the uniform finiteness property (Chapter~3, Lemma 2.13~of~\cite{vandendries-tame}) and Corollary~\ref{cor:principal-right-adjoint}.
\end{proof}
\end{proposition}

\begin{proposition}\label{prop:isomorphism-decomp}
Consider a definable bounded distributive lattice $(H, \sqsubseteq)$. When $\dim H = 1$, the principal chain $C_H$ decomposes into finitely many singleton points and intervals such that on each point and interval, the isomorphism type of the preimage $c_\bullet^{-1}$ is constant.
\begin{proof}
Let $N \in \mathbb{N}$ denote the uniform bound of Proposition~\ref{prop:fiber-uniform-bound}. Since there are finitely many distributive lattices with no more than $N$ elements, this means that $c_\bullet^{-1}$ takes on finitely many isomorphism types. Enumerating these isomorphism types as $\{1,\dots,n\}$, we conclude the definability of the function $f : C_H \rightarrow \{1,\dots,n\}$ which sends each $x \in C_H$ to the index of the isomorphism type of $c_\bullet^{-1}(x)$. By Corollaries~\ref{cor:homogeneous-decomposition}~and~\ref{cor:principal-chain-homogeneous}, $C_H$ decomposes into finitely many singleton points and intervals, on each of which the function $f$ is constant.
\end{proof}
\end{proposition}

\begin{example}\label{example:nonuniqueness-of-isomorphism-decomp}
Equip the set $S=[0,1] \times \{0,1\}$ with the product order relation $\sqsubseteq_P$ given by $$(h_1,f_1) \sqsubseteq_P (h_2,f_2) \leftrightarrow (h_1 \leq h_2 \wedge f_1 \leq f_2).$$ Let $\leq_2$ denote the usual lexicographic order on $R^2$ restricted to $S$. Finally, consider the order relation $\sqsubseteq$ on $S$ defined so that $(h_1,f_1)\sqsubseteq (h_2,f_2)$ precisely if one of the following holds:
\begin{itemize}
    \item $h_1 < 1$, $h_2 < 1$ and $(h_1,f_1) \sqsubseteq_P (h_2,f_2)$; or else
    \item $h_1 = 1$ or $h_2 = 1$, and $(h_1,f_1) \leq_2 (h_2,f_2)$.
\end{itemize}
Then $H=(S,\sqsubseteq)$ and $P=(S,\sqsubseteq_P)$ are both definable bounded distributive lattices of dimension one. Moreover, in the decomposition of Proposition~\ref{prop:isomorphism-decomp}, the isomorphism type of $c_\bullet^{-1}$ is identical and constant (the two-element Boolean algebra) on both $C_H$ and $C_P$. Nonetheless, we do not have $H \cong P$, since $(0,1) \sqcup_P (1,0) = (1,1) = \top_P$ holds in $P$, while only $\bot$ and $\top$ have complements in $H$.
\end{example}

\begin{body}
As Example~\ref{example:nonuniqueness-of-isomorphism-decomp} shows, the principal chain $C_H$ and the preimages of the $c_\bullet$ map of Corollary~\ref{cor:principal-right-adjoint} do not determine the structure of a definable distributive lattice up to isomorphism. Creating a decomposition which allows us to recover the structure requires some technicalities. Instead of the isomorphism types of definable bounded distributive lattices from Proposition~\ref{prop:isomorphism-decomp}, we have to work with the more rigid \textit{lexicographic fiber types}. This in turn allows us to define \textit{lexicographic gluing types}, the key to a structure-recovering decomposition. For the remainder of this section, if $x$ belongs to the principal chain $C_H$ of the definable bounded distributive lattice $(H,\sqsubseteq)$, we call the set $c_\bullet^{-1}(x)$ the \textit{principal fiber above $x$}, and denote the principal fiber above $x \in C_H$ as $H_x$.
\end{body}

\begin{definition}\label{def:lex-order}
In what follows, $\leq_k$ denotes the \textit{lexicographic order} on the set $R^k$.
\end{definition}

\begin{proposition}\label{prop:lex-coordinates}
Consider a definable bounded distributive lattice $(H,\sqsubseteq)$  with $H \subseteq R^k$ and $\dim H = 1$, and let $N$ denote the uniform bound on the cardinality of the principal fibers. Let $f_n : C_H \rightarrow H$ denote the partial function that assigns to each $x \in C_H$ the $n$th smallest element of the principal fiber $H_x$ in the lexicographic order $\leq_k$. Then each of the \textit{lexicographic coordinate functions} $f_1,\dots,f_{N}$ are definable.
\end{proposition}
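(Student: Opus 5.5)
The plan is to write down an explicit first-order formula for the graph of each $f_n$. Everything it uses has already been shown to be definable: the order $\sqsubseteq$, the projection $c_\bullet$ of Lemma~\ref{lemma:principal-projection}, the principal chain $C_H$ (Definition~\ref{def:principal-chain}, definable via the fiber-dimension definability result of Chapter~4 of~\cite{vandendries-tame}), and the lexicographic order $\leq_k$ on $R^k$. The last is definable from $\leq$ and equality by a finite disjunction over the first coordinate where two tuples differ.

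First I would observe that the principal fiber $H_x = \SetComp{h \in H}{c_h = x}$ is uniformly definable in the parameter $x \in C_H$. This is simply because the graph of $c_\bullet$ is a definable subset of $H \times C_H$. By Proposition~\ref{prop:fiber-uniform-bound}, each $H_x$ has at most $N$ elements, and $\leq_k$ restricted to $H_x$ is a finite linear order. Hence the $n$th smallest element is well defined exactly when $\# H_x \geq n$. That is the intended domain of the partial function $f_n$, and it is itself definable as $\exists y_1 \ldots \exists y_n$ with the $y_i$ pairwise distinct and all in $H_x$.

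Next I would define the graph of $f_n$ directly. We put $f_n(x) = y$ if and only if $x \in C_H$, $y \in H$, $c_y = x$, and there exist $z_1, \dots, z_{n-1} \in H$ such that $z_1 <_k z_2 <_k \dots <_k z_{n-1} <_k y$, each $c_{z_i} = x$, and every $w \in H$ with $c_w = x$ and $w <_k y$ equals one of the $z_i$. This is a single first-order formula, with the parameters already used to define $H$ and $\sqsubseteq$, so the graph is definable. Functionality follows because in a finite linear order exactly one element has exactly $n-1$ predecessors. An alternative is to define $f_n$ by recursion on $n$: $f_1(x)$ is the $\leq_k$-minimum of $H_x$, and $f_{n+1}(x)$ is the $\leq_k$-minimum of $\SetComp{h \in H_x}{f_n(x) <_k h}$. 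This gives a finite chain of $N$ definitions.

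I expect no real obstacle. The only point needing care is that the quantification over $z_1,\dots,z_{n-1}$ has fixed finite length. This is exactly where the uniform bound $N$ from Proposition~\ref{prop:fiber-uniform-bound} matters: it guarantees that only the finitely many functions $f_1,\dots,f_N$ are needed to exhaust every fiber, and that each of them uses a fixed, uniform formula.
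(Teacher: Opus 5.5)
Your proof is correct. The paper states this proposition without proof, treating it as routine, and your explicit first-order formula for the graph of $f_n$ (or the recursive variant) is exactly the standard argument it leaves implicit.

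One small correction to your last paragraph: the number of quantifiers in the formula for $f_n$ depends only on $n$, not on $N$. The uniform bound $N$ from Proposition~\ref{prop:fiber-uniform-bound} is needed only to ensure that the finitely many functions $f_1,\dots,f_N$ exhaust every principal fiber.
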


\begin{definition}\label{def:lex-types}
In a definable bounded distributive lattice $(H,\sqsubseteq)$ with $H \subseteq R^k$ and $\dim H = 1$, let $f_1,\dots,f_N$ denote the lexicographic coordinate functions of $H$. Let $H_x$ denote the principal fiber above the chain element $x \in C_H$. The lexicographic coordinate functions identify each principal fiber with a subset of $\{1, \dots, N\}$, so a finite table of coordinate indices allows one to keep track of the join operation in the fiber. More formally, to each $x \in C_H$ we assign an $N\times N$ integer table (equivalently, a vector in $R^{N \cdot N}$) whose $(i,j)$th entry is
\begin{enumerate}
    \item $k$ if $f_i(x), f_j(x)$ and $f_k(x)$ are all defined, and $f_i(x) \sqcup f_j(x) = f_k(x)$,
    \item $0$ otherwise (when one of the coordinates is not defined).
\end{enumerate}
We call this table the \textit{lexicographic fiber type over $x$}.

Given another element $y \in C_H$ so that $x \sqsubset y$, we assign to the pair $(x,y)$ an $N$-integer sequence whose $i$th entry is
\begin{enumerate}
    \item $j$ if $f_i(x)$ is defined and $y \sqcup f_i(x) = f_j(y)$,
    \item $0$ otherwise (when $f_i$ is not defined at $x$).
\end{enumerate}
We call this sequence the \textit{lexicographic gluing type over the pair $(x,y)$}. 
\end{definition}

\begin{proposition}\label{prop:lex-determination}
Taken together, the following information completely determines the structure of a definable bounded distributive lattice $(H,\sqsubseteq)$ with $\dim H = 1$:
\begin{enumerate}
    \item the principal chain $(C_H, \sqsubseteq)$ as a linear order,
    \item the principal chain map $c_\bullet : H \rightarrow C_H$,
    \item the lexicographic fiber type over $H_x$ for each $x \in C_H$, and
    \item the lexicographic gluing type over $(x,y)$ for each $x \sqsubset y \in C_H$.
\end{enumerate}
\begin{proof}
It suffices to show that this information allows one to recover $x \sqcup y$ for any $x, y \in H$ since one can define the other bounded lattice operations from $\sqcup$ alone.

First we deal with the case where $c_x = c_y$. In this case $c_{x\sqcup y} = c_x \sqcup c_y = c_y$, so $x,y$ and $x\sqcup y$ all belong to the fiber $H_y$. The lexicographic fiber type of $H_y$ allows one to define the join of $x$ and $y$ in $H_y$ using the lexicographic coordinates. By Corollary~\ref{cor:principal-right-adjoint} the fibers constitute sublattices of $H$, and so the join of $x$ and $y$ in $H_y$ coincides with $x \sqcup y$.

Now we deal with the case where (w.l.o.g.) $c_x \sqsubset c_y$. Then $c_{x \sqcup y} = c_x \sqcup c_y = c_y$ as well, so $x \sqcup y \in H_y$. Since $c_y \sqsubseteq y$, we have $c_y \sqcup y = y$. This means that $x \sqcup y = x \sqcup (c_y \sqcup y) = (x \sqcup c_y) \sqcup y$. The lexicographic gluing type of $(c_x,c_y)$ allows one to identify $x \sqcup c_y \in H_y$, and as above one can define $(x \sqcup c_y) \sqcup y$ directly using the lexicographic fiber type of $H_y$.
\end{proof}
\end{proposition}

\begin{body}
In Lemma~\ref{lemma:lex-composition}, we prove that lexicographic gluing types \textit{compose}: given $x \sqsubset y \sqsubset z \in C_H$ in a bounded distributive lattice $(H,\sqsubseteq)$, the gluing types over $(x,y)$ and $(y,z)$ together uniquely, definably determine the gluing type over $(x,z)$.
\end{body}

\begin{definition}\label{def:lex-composition}
Consider a definable bounded distributive lattice $(H, \sqsubseteq)$ with $\dim H = 1$ and three elements $x \sqsubset y \sqsubset z$ of its principal chain $C_H$. Let $\ell_{x,y}(n)$ denote the $n$th entry of the lexicographic gluing type over $(x,y)$, and similarly $\ell_{y,z}(n)$ denote the $n$th entry of the lexicographic gluing type over $(y,z)$. We define the \textit{composite lexicographic gluing type over $(y,z)$ and $(x,y)$} as the $N$-integer sequence whose $n$th entry is
\begin{enumerate}
    \item $\ell_{y,z}(\ell_{x,y}(n))$ if $\ell_{x,y}(n) > 0$ and $\ell_{y,z}(\ell_{x,y}(n)) > 0$,
    \item 0 otherwise (when one of the entries above equals zero)
\end{enumerate}
and denote this composite as $\ell_{y,z} \circ \ell_{x,y}$.
\end{definition}

\begin{lemma}\label{lemma:lex-composition}
Given a definable bounded distributive lattice $(H, \sqsubseteq)$ with $\dim H = 1$ and any $x \sqsubset y \sqsubset z \in C_H$, the composite lexicographic gluing type over $(y,z)$ and $(x,y)$ coincides with the lexicographic gluing type over the pair $(x,z)$.
\begin{proof}
Let $\ell_{i,j}$ denote the lexicographic gluing type over the pair $(i,j)$. We must show that the $n$th entry of $\ell_{y,z} \circ \ell_{x,y}$ coincides with that of $\ell_{x,z}$. We straightforwardly verify that the sequences agree on their $n$th entries for each $n$.
Set $a = \ell_{x,y}(n)$ and assuming $a > 0$, define $b = \ell_{y,z}(a)$. First consider the case when $a = 0$. In this case $f_n(x)$ is not defined, and consequently the $n$th entry of the lexicographic gluing type over $(x,z)$ is $0$ as required.
The case $a > 0, b = 0$ cannot occur, since, expanding the definition of lexicographic gluing types of $(x,y)$ and $(y,z)$, this would mean that
\begin{enumerate}
    \item $f_n(x)$ is defined,
    \item $c_y \sqcup f_n(x) = f_a(y)$, but
    \item $f_a(y)$ is not defined.
\end{enumerate}
Finally consider the case when $a > 0$ and $b > 0$ as well. Then, expanding the definition of lexicographic gluing types,
\begin{enumerate}
    \item $f_n(x)$ is defined,
    \item $c_y \sqcup f_n(x) = f_a(y)$,
    \item $f_a(y)$ is defined, and
    \item $c_z \sqcup f_a(y) = f_b(z)$.
\end{enumerate}
It follows that
\begin{align*}
    c_z \sqcup f_n(x) &= (c_z \sqcup c_y) \sqcup f_n(x) &(\text{since } y \sqsubseteq z) \\
    &= c_z \sqcup (c_y \sqcup f_n(x)) & \\
    &= c_z \sqcup f_a(y) & \text{(by 2 above)}\\
    &= f_b(z) & \text{(by 4 above)}
\end{align*}
and combining $c_z \sqcup f_n(x) = f_b(z)$ with 1 and 3, the definition of the lexicographic gluing type of $(x,z)$ forces the $n$th entry to coincide with $b$ as required.
\end{proof}
\end{lemma}

\begin{proposition}\label{prop:lex-definability}
Take a definable bounded distributive lattice $(H,\sqsubseteq)$ with $H \subseteq R^k$ and $\dim H = 1$, with $N$ denoting the uniform bound of Proposition~\ref{prop:fiber-uniform-bound}. Fix some enumeration $\{1,\dots, (N+1)^{N\cdot N}\} \subseteq R$ of the lexicographic fiber types that may occur in the fibers above $H$. The function that assigns to each $x \in C_H$ the index of its lexicographic fiber type in this enumeration is definable. Under an analogous enumeration, the function that associates to each pair $x \sqsubset y \in C_H$ the index of the lexicographic gluing type over $(x,y)$ is definable.
\end{proposition}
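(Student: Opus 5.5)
The plan is to write each of the two index functions as a finite case distinction, where every case is cut out by a first-order formula over $R$. First, definability of the principal chain $C_H$ follows from Definition~\ref{def:principal-chain}: the condition $\dim[y,x]_\sqsubseteq = 1$ is definable uniformly in $(y,x)$ by Chapter~4, Proposition~1.5 of~\cite{vandendries-tame}. Definability of $c_\bullet$ is Lemma~\ref{lemma:principal-projection}. Definability of the lexicographic coordinate functions $f_1,\dots,f_N$ is Proposition~\ref{prop:lex-coordinates}. So the relations ``$f_i(x)$ is defined'' and ``$f_i(x)\sqcup f_j(x) = f_k(x)$'' are definable subsets of $C_H$, for each fixed triple $(i,j,k)$ with $1\le i,j,k\le N$. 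The relation $\sqcup$ is definable from $\sqsubseteq$ by Proposition~\ref{prop:lattice-equidefinable}.

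For the fiber types, fix a table $T \in \{0,\dots,N\}^{N\times N}$. The set $\SetComp{x\in C_H}{\text{the lexicographic fiber type over } x \text{ is } T}$ is defined by the finite conjunction over all pairs $(i,j)$ of the following clauses:
\begin{itemize}
    \item if $T_{i,j}=k>0$, the formula ``$f_i(x),f_j(x),f_k(x)$ are defined and $f_i(x)\sqcup f_j(x)=f_k(x)$'';
    \item if $T_{i,j}=0$, the formula ``$f_i(x)$ or $f_j(x)$ is undefined, or no defined $f_k(x)$ equals $f_i(x)\sqcup f_j(x)$''.
\end{itemize}
The last alternative can never hold, since by Corollary~\ref{cor:principal-right-adjoint} the fiber is a sublattice, but including it costs nothing. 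There are only finitely many tables, at most $(N+1)^{N\cdot N}$, and these sets partition $C_H$. The index function is therefore the finite disjunction $\bigvee_T (x\in S_T \wedge v = \mathrm{idx}(T))$. Here each $\mathrm{idx}(T)$ is a natural number, hence a $0$-definable element of $R$.

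For the gluing types I proceed the same way on the definable set $\SetComp{(x,y)\in C_H^2}{x\sqsubset y}$. Fix a sequence $s\in\{0,\dots,N\}^N$. The pairs with gluing type $s$ are those satisfying, for each $i$:
\begin{itemize}
    \item if $s_i=j>0$, the formula ``$f_i(x)$ and $f_j(y)$ are defined and $y\sqcup f_i(x)=f_j(y)$'';
    \item if $s_i=0$, the formula ``$f_i(x)$ is undefined''.
\end{itemize}
The one point needing an argument is that these sets partition the domain. Otherwise some defined $f_i(x)$ could have $y\sqcup f_i(x)$ outside the fiber $H_y$. This is excluded by Proposition~\ref{prop:principal-projection-joins}: it gives $c_{y\sqcup f_i(x)} = c_y\sqcup c_{f_i(x)} = y\sqcup x = y$, so $y\sqcup f_i(x)\in H_y$ and equals some $f_j(y)$. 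Then, as before, the index function is a finite disjunction of definable pieces, and we enumerate using at most $(N+1)^N$ indices.

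I expect no real obstacle. The only care needed is the exhaustiveness check just described, together with the remark that a function $D\to R$ taking finitely many values and having definable level sets is definable. Uniformity in $x$ comes for free because $N$ is a single bound, by Proposition~\ref{prop:fiber-uniform-bound}.
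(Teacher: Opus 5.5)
Your proof is correct. The paper gives no proof of this proposition and treats it as routine; your finite case split into definable level sets, over at most $(N+1)^{N\cdot N}$ tables and at most $(N+1)^N$ sequences, is exactly the argument it leaves implicit. Your exhaustiveness check is also worth keeping: using Proposition~\ref{prop:principal-projection-joins} to show $y \sqcup f_i(x) \in H_y$ verifies that the gluing type in Definition~\ref{def:lex-types} is well-posed, which the paper tacitly assumes.
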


\begin{body}
We use Lemma~\ref{lemma:lex-composition} to decompose bounded distributive lattices with dimension one into finitely many well-behaved intervals and points. We perform this decomposition below in several, increasingly technical phases, with Theorem~\ref{thm:tame-variation} giving the final result.
\end{body}

\begin{proposition}\label{prop:lex-decomposition-basic}
Given a definable bounded distributive lattice $(H,\sqsubseteq)$ with $\dim H = 1$, one can decompose the principal chain $C_H$ into finitely many $\sqsubseteq$-intervals $J_1,\dots,J_n$ and points, so that on each interval $J_i$ the lexicographic fiber type function is constant.
\begin{proof}
Follows from Corollary~\ref{cor:homogeneous-decomposition} applied to the lexicographic fiber type function, whose definability we established in Proposition~\ref{prop:lex-definability}. 
\end{proof}
\end{proposition}

\begin{proposition}\label{prop:lex-decomposition-advanced}
Take a definable bounded distributive lattice $(H,\sqsubseteq)$ with $\dim H = 1$ and an $\sqsubseteq$-interval $K \subseteq C_H$ on which the lexicographic fiber type function takes a constant value. We can decompose $K$ into finitely many $\sqsubseteq$-intervals and points $K_1,\dots,K_n$, so that for each of the $\sqsubseteq$-intervals $K_i$ there is a lexicographic gluing type $e$ (depending on $i$) satisfying the following conditions:
\begin{enumerate}
    \item \textit{Composition of gluing types:} the lexicographic gluing type $e$ satisfies the equality $e \circ e = e$ under composition of gluing types,
    \item \textit{Locally unique gluing types:} for each $c \in K_i$ we can find an interval $(c,r)_\sqsubseteq$, so that the lexicographic gluing type of $(c,d)$ is $e$ for each $d \in (c,r)_\sqsubseteq$.
\end{enumerate}
\begin{proof}
Fix some $c \in K$. Let $\ell_c : \SetComp{y \in K}{c \sqsubset y} \rightarrow \{1, \dots, (N+1)^N\}$ denote the map with $\ell_c(y) = \ell_{c,y}$. The definability of this map follows from Proposition~\ref{prop:lex-definability}. By Corollary~\ref{cor:homogeneous-decomposition}, we can decompose $\SetComp{y \in K}{c \sqsubset y}$ into $\sqsubseteq$-intervals and points on which $\ell_c$ is constant. Some $\sqsubseteq$-interval in this decomposition has $c$ as its left endpoint. Let $r(c)$ denote the corresponding right endpoint. We have deduced the existence of a unique lexicographic gluing type $e(c)$ so that $\forall d \in K. (c \sqsubset d) \wedge (d \sqsubset r(c)) \rightarrow \ell_{c,d} = e(c)$. Moreover, the map $c \mapsto e(c)$ is definable with finite codomain. Applying Corollary~\ref{cor:homogeneous-decomposition} once again, we obtain finitely many intervals, points and empty sets $K_1, K_2, \dots $ so that each interval $K_i$ satisfies a local gluing condition of the form
$$ \forall c \in K_i. \forall d \in (c, r(c))_\sqsubseteq. \: \ell_{c,d} = e_i.$$
Now we show that on each interval $K_i$, the corresponding lexicographic gluing type $e_i$ satisfies $e_i \circ e_i = e_i$ under composition of gluing types. Consider an element $x \in K_i$, and a corresponding $y \in (x,r(x))_\sqsubseteq$, and a $z \in (y,r(y) \sqcap r(x))_\sqsubseteq$. We then have both $\ell_{x,y} = e_i$ and $\ell_{y,z} = e_i$, as well as $\ell_{x,z} = e_i$ using the fact that $z \sqsubset r(x)$. By Lemma~\ref{lemma:lex-composition}, we conclude $e_i \circ e_i = \ell_{y,z} \circ \ell_{x,y} = \ell_{x,z} = e_i$ as claimed.
\end{proof}
\end{proposition}

\begin{proposition}\label{prop:lex-decomposition-expert}
Take a definable bounded distributive lattice $(H,\sqsubseteq)$ with $\dim H = 1$, a $\sqsubseteq$-interval $L \subseteq C_H$, and a lexicographic gluing type $e$ of $H$. Assume the following conditions hold:
\begin{enumerate}
    \item \textit{Constant fiber type}: the lexicographic fiber type function takes a constant value on $L$,
    \item \textit{Composition of gluing type:} the equality $e \circ e = e$ under composition of gluing types,
    \item \textit{Locally unique gluing type}: for each $c \in L$ we can find an interval $(c,r)_\sqsubseteq$, so that the lexicographic gluing type of $(c,d)$ is $e$ for each $d \in (c,r)_\sqsubseteq$.
\end{enumerate}
Then one can decompose $L$ into a union of finitely many $\sqsubseteq$-intervals and points $L_1, L_2, \dots$ so that for each $\sqsubseteq$-interval $L_i$ and $x \sqsubset y \in L_i$, the lexicographic gluing type of $(x,y)$ coincides with $e$.
\begin{proof}
Say that an element $d \in L$ \textit{subsumes} $c \in L$ when $c \sqsubset d$ and $\ell_{c,y} = e$ holds for any $y$ with $c \sqsubset y \sqsubseteq d$. Define
$$ s(c) = \sup \SetComp{d \in L}{d \text{ subsumes } c},$$
if necessary taking this supremum in the definable order completion of $L$. By Proposition~\ref{prop:homogeneous-order-embedding} the principal chain embeds into $(R, \leq)$ itself, so one can even regard $s$ as a definable function with codomain $R\cup\{\pm\infty\}$. Note that the set of subsuming elements is downward-closed. If $c \sqsubset y \sqsubset s(c)$, then we can find some $y \sqsubset y' \sqsubset s(c)$ that subsumes $c$, so in fact $y$ subsumes $c$. 

\vspace{0.5em}\textbf{Claim 1.} We claim that whenever $d$ subsumes $c$, we in fact have $s(c) = s(d)$. Clearly we have $s(d) \sqsubseteq s(c)$. We prove $s(c) \sqsubseteq s(d)$ by showing that any $x \in L$ with $x \sqsubset s(c)$ satisfies $x \sqsubseteq s(d)$. This holds by transitivity when $x \sqsubseteq d$, so from here onward assume $d \sqsubset x$. Take any $y \in L$ between $d$ and $x$, then choose $z \in L$ small enough so that $z \sqsubset y$ and the locally unique gluing type condition applies, giving $\ell_{d,z} = e$. Thus we have
$$ d \sqsubset z \sqsubset y \sqsubseteq x$$
and we can argue that
\begin{align*}
    \ell_{d,y} &= \ell_{z,y} \circ \ell_{d,z} & (\text{Lemma } \ref{lemma:lex-composition}) \\
    &= \ell_{z,y} \circ e & (\text{by } \ell_{d,z} = e) \\
    &= \ell_{z,y} \circ (e \circ e) & (\text{assumption 2})\\
    &= (\ell_{z,y} \circ e) \circ e & \\
    &= (\ell_{z,y} \circ \ell_{d,z}) \circ e & (\text{by } \ell_{d,z} = e) \\
    &= \ell_{d,y} \circ e & (\text{Lemma } \ref{lemma:lex-composition}) \\
    &= \ell_{d,y} \circ \ell_{c,d} & (d \text{ subsumes } c)\\
    &= \ell_{c,y} & (\text{Lemma } \ref{lemma:lex-composition}) \\
    &= e & (y \text{ subsumes } c)
\end{align*}
holds for such an $x \in L$. Thus any $x \in L$ with $d \sqsubset x \sqsubseteq s(c)$ subsumes $d$, and therefore $x \sqsubseteq s(d)$ as required.

\vspace{0.5em}\textbf{Claim 2.} We claim that the definable function $s$ takes only finitely many values. To see this, consider the definable family $s^{-1}(c) = \SetComp{x \in L}{s(x) = c}$ as $c$ varies over $s(L)$. We have $\dim s^{-1}(s(L)) = \dim L = 1$ since $L$ is a $\sqsubseteq$-interval in the homogeneous one-dimensional total order $(C_H, \sqsubseteq)$. Moreover, by the locally unique gluing type condition, each fiber $s^{-1}(c)$ has dimension 1. But then, by Corollary 1.8. of Chapter 4 in \cite{vandendries-tame}, $\dim s(L) + 1 = \dim s^{-1}(s(L)) = \dim L = 1$. Consequently $\dim s(L) = 0$, which shows the finiteness of $s(L)$. 

\vspace{0.5em}
\textbf{Claim 3.} Using Claim 2 and Corollary~\ref{cor:homogeneous-decomposition}, decompose $L$ into finitely many $\sqsubseteq$-intervals and points $L_1, \dots, L_n$ so that the function $s$ takes a constant value on each of these $\sqsubseteq$-intervals. We claim that on each $\sqsubseteq$-interval $L_i$, for any $x \sqsubset y \in L_i$, the lexicographic gluing type $\ell_{x,y}$ coincides with $e$. We have $x \sqsubseteq y \sqsubset s(x)$. Since $(y,s(x))_\sqsubseteq$ is an interval in the homogeneous linear order $C_H$, we get that $\dim (y,s(x))_\sqsubseteq = 1$, and can choose some $z$ so that $y \sqsubset z \sqsubset s(x)$. Then some $z'$ with $z \sqsubseteq z' \sqsubset s(x)$ subsumes $x$ by definition of $s(x)$, and $y \in (x,z')_\sqsubseteq$, so $\ell_{x,y} = e$ holds as claimed.
\end{proof}
\end{proposition}

\begin{theorem}\label{thm:tame-variation}
Given a definable bounded distributive lattice $(H,\sqsubseteq)$ with $\dim H = 1$, one can decompose the principal chain $C_H$ into the following data:
\begin{itemize}
    \item finitely many $\sqsubseteq$-cells ($\sqsubseteq$-intervals and points) $M_1,\dots,M_n$,
    \item a definable function $t$ that assigns to each natural number index $1 \leq i \leq n$ a lexicographic fiber type $t_i$ of $H$,
    \item a definable function $l$ that assigns to each pair of natural number indices $1 \leq i \leq j \leq n$ a lexicographic gluing type $l_{i,j}$
\end{itemize}
 so that the following hold:
\begin{enumerate}
    \item the $\sqsubseteq$-cells are linearly ordered, so that when $1 \leq i < j \leq n$ holds, $\forall x \in M_i. \forall y \in M_j. x \sqsubset y$,
    \item the lexicographic gluing types $l_{i,i}$ satisfy $l_{i,i} \circ l_{i,i} = l_{i,i}$ for $1 \leq i \leq n$,
    \item the lexicographic fiber type function is constant on each cell, meaning that if $x,y \in M_i$, then both $x$ and $y$ have lexicographic fiber type $t_i$, and
    \item the cells of the decomposition determine the lexicographic gluing types, in the sense that if $x \in M_i$ and $y \in M_j$ for $x \sqsubset y$ and $i \leq j$, then $\ell_{x,y} = l_{i,j}$.
\end{enumerate}
\begin{proof}
Let $M_1, \dots, M_n$ denote the decomposition obtained by applying Proposition~\ref{prop:lex-decomposition-basic} to $C_H$, then Propositions~\ref{prop:lex-decomposition-advanced}~and~\ref{prop:lex-decomposition-expert} to each interval of the resulting decompositions. We know from Proposition~\ref{prop:lex-decomposition-basic} that the lexicographic fiber type function is constant on each such cell, and from Proposition~\ref{prop:lex-decomposition-expert} that \textit{within} each of the resulting $\sqsubseteq$-intervals and points, only one lexicographic gluing type occurs. Denoting this gluing type $l_{i,i}$, Proposition~\ref{prop:lex-decomposition-expert} also guarantees that $l_{i,i} \circ l_{i,i} = l_{i,i}$. We now show that even \textit{between} points that lie in two different $\sqsubseteq$-cells, cell membership completely determines the lexicographic gluing type. Take cell indices $i < j$, $x \in M_i$ and $y \in M_j$. We prove only that if $M_i$ is an $\sqsubseteq$-interval, then for $x' \in M_i$ with $x \sqsubset x'$, we have $\ell_{x',y} = \ell_{x,y}$. The various other cases follow by analogous arguments. Using the fact that the $\sqsubseteq$-interval $M_i$ has no largest element, fix some $x'' \in M_i$ with $x \sqsubset x' \sqsubset x''$, then argue as follows:
\begin{align*}
    \ell_{x',y} &= \ell_{x'',y} \circ \ell_{x',x''} & (\text{Lemma } \ref{lemma:lex-composition}) \\
    &= \ell_{x'',y} \circ l_{i,i} & (\text{by } x', x'' \in M_i)\\
    &= \ell_{x'',y} \circ l_{i,i} \circ l_{i,i} & (\text{by } l_{i,i} \circ l_{i,i} = l_{i,i})\\
    &= \ell_{x'',y} \circ \ell_{x',x''} \circ l_{i,i} & (\text{by } x', x'' \in M_i)\\
    &= \ell_{x',y} \circ l_{i,i} & (\text{Lemma } \ref{lemma:lex-composition})\\
    &= \ell_{x',y} \circ \ell_{x,x'} & (\text{by } x, x' \in M_i)\\
    &= \ell_{x,y}. & (\text{Lemma } \ref{lemma:lex-composition})
\end{align*}
\end{proof}
\end{theorem}

\begin{corollary}\label{cor:tame-variation-blueprints}
Every definable bounded distributive lattice $(H, \sqsubseteq)$ with $\dim H = 1$ is isomorphic to a cohesive tower over a cohesive blueprint.
\end{corollary}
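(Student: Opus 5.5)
The plan is to read off a cohesive blueprint directly from the decomposition of Theorem~\ref{thm:tame-variation} and then check that the cohesive tower it determines reproduces the lattice operations of $H$. By Corollary~\ref{cor:principal-chain-homogeneous} and Proposition~\ref{prop:homogeneous-order-embedding}, $(C_H,\sqsubseteq)$ is definably order-isomorphic to a definable $M\subseteq R$. This $M$ has a minimum and a maximum: $\bot\in C_H$ vacuously, and $c_\top$ is the $\sqsubseteq$-maximum of $C_H$. The cells $M_1<\dots<M_n$ of Theorem~\ref{thm:tame-variation} transport to a decomposition of $M$ into points and intervals. Take $N$ from Proposition~\ref{prop:fiber-uniform-bound}. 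For each $i$, let $H_i\subseteq\{1,\dots,N\}$ be the set of indices $k$ at which the constant lexicographic fiber type $t_i$ has $f_k$ defined. Order $H_i$ by the join table $t_i$. The lexicographic coordinates then give, for every $x\in M_i$, an isomorphism $H_i\cong H_x$, and $H_x$ is a sublattice by Corollary~\ref{cor:principal-right-adjoint}.

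The gluing morphisms are $l_{i,j}$, the lexicographic gluing types of Theorem~\ref{thm:tame-variation}, viewed as maps $H_i\to H_j$. Concretely, $l_{i,j}(k)$ is the index of $c_y\sqcup f_k(x)$ in $H_y$ for any $x\in M_i$, $y\in M_j$ with $x\sqsubset y$. One subtlety is that $l_{i,i}$ must be defined even when $M_i$ is a point. In that case I would set $l_{i,i}=\mathrm{id}$, which is idempotent, and note that it matches the tower's same-point case $m_1=m$.

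Next I would check that $l_{i,j}$ is a morphism of bounded-below lattices. The map $p\mapsto c_y\sqcup p$ preserves joins, and it preserves meets by distributivity, since $c_y\sqcup(p\sqcap q)=(c_y\sqcup p)\sqcap(c_y\sqcup q)$. It sends $\bot_x=c_x$ to $c_y$, which is $\bot_y$ because $c_x\sqsubseteq c_y$. Cohesion condition 1 follows from Lemma~\ref{lemma:lex-composition}: choose $x\sqsubset y\sqsubset z$ in $M_i,M_j,M_k$. When $i=j$ or $j=k$ and the cell is an interval, pick intermediate points inside it; when the cell is a point, use the identity convention.

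Cohesion condition 2 I expect to be the main obstacle. My approach is to compute $r_{j,i}$ inside $H$. For $q\in H_y$ and $x\sqsubset y$, the right adjoint of $p\mapsto c_y\sqcup p$ from $H_x$ to $H_y$ should be $q\mapsto$ the largest $p\in H_x$ with $c_y\sqcup p\sqsubseteq q$. I would identify this with the fiber-$x$ part of $q$ via the tower formula $(i,m_1,p\sqcap_i r_{j,i}(q))$, meaning it should equal $c^x(q)$, the projection of $q\sqcap \top_{H_x}$ into $H_x$. The two identities then become lattice facts in $H$:
\begin{itemize}
\item $(c_y\sqcup p)\sqcap t_x = p\sqcup(c_y\sqcap t_x)$, where $t_x=\top_{H_x}$, which is distributivity together with $p\sqsubseteq t_x$;
\item $c_y\sqcup(q\sqcap t_x) = q\sqcap(c_y\sqcup t_x)$, which is distributivity together with $c_y\sqsubseteq q$.
\end{itemize}
The delicate part is proving that $q\sqcap t_x$ lies in $H_x$, that is, $c_{q\sqcap t_x}=c_x$. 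This follows because $c_\bullet$ preserves meets (Proposition~\ref{prop:principal-projection-joins}) and $c_x\sqsubseteq c_y$. It then remains to confirm that this meet-projection really is the adjoint $r_{j,i}$, which holds by the adjunction $c_y\sqcup p\sqsubseteq q\iff p\sqsubseteq q\sqcap t_x$ for $p\in H_x$.

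Finally, the map $h\mapsto(i,c_h,\text{index of }h\text{ in }H_{c_h})$, with $i$ the cell containing $c_h$, is a definable bijection onto the tower. By Proposition~\ref{prop:lex-determination} and the computation in its proof, the join $x\sqcup y=(x\sqcup c_y)\sqcup y$ agrees with the tower's join formula. The meet is then determined by the join, so agreement on joins is enough to conclude that the bijection is an isomorphism.
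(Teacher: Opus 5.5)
Your proposal is correct and follows the route the paper indicates; the paper gives no separate proof, only the remark after the corollary. That route is: transport $C_H$ into $R$ via Proposition~\ref{prop:homogeneous-order-embedding}, read the fiber lattices and gluing morphisms off Theorem~\ref{thm:tame-variation}, and match joins using Proposition~\ref{prop:lex-determination}. Your explicit check of the cohesion identities through the right adjoint $q\mapsto q\sqcap\top_{H_x}$, and your choice $l_{i,i}=\mathrm{id}$ on point cells (an arbitrary idempotent there could break $l_{i,k}=l_{i,k}\circ l_{i,i}$), fill in details the paper leaves implicit.
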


\begin{body}
Theorem~\ref{thm:tame-variation} gives an essentially complete classification of one-dimensional distributive lattices definable in o-minimal structures expanding a real-closed field. Proposition~\ref{prop:homogeneous-order-embedding} lets us regard the principal chain as a subset of $R$, and specifying a cell decomposition of $R$, along with a set of lexicographic fiber and gluing data, lets us reconstruct every one-dimensional distributive lattice as a cohesive tower over a \textit{canonical} cohesive blueprint.
\end{body}

\section{Birkhoff-style results}\label{sec:birkhoff-style-results}

\begin{body}
By classic results of Birkhoff and Stone, one can embed every bounded distributive lattice into a Boolean algebra. This result cannot admit a fully general definable version in the o-minimal setting, due to the shortage of infinite definable Boolean algebras (Proposition~\ref{prop:boolean-algebras-are-finite}). As we have seen in Example~\ref{example:non-heyting-complete-lattice}, unlike finite distributive lattices, distributive lattices of dimension one may fail to have a Heyting algebra structure. In this section, we prove that at least every definable bounded distributive lattice of dimension one \textit{embeds} definably into a definable Heyting algebra (Theorem~\ref{thm:birkhoff-embedding}).
\end{body}

\subsection*{Covariant regular representation}

\begin{body}
Any poset $(P, \sqsubseteq)$ embeds into the Heyting algebra $(\downarrow P, \subseteq)$ of downward-closed subsets of $P$, with the lattice meets (resp. joins) corresponding to set intersections (resp. set unions). This \textit{covariant regular representation} constitutes a key ingredient in Birkhoff's representation theorem.
\end{body}

\begin{theorem}[Birkhoff]\label{thm:birkhoff}
Consider a finite distributive lattice (equivalently, finite Heyting algebra) $(H,\sqsubseteq)$. Let $J(H)$ denote the set of \textit{join-irreducible elements} of $(H, \sqsubseteq)$ with the usual convention that $\bot \not\in J(H)$. The covariant regular representation $(\downarrow J(H), \subseteq)$ of the poset $(J(H), \sqsubseteq)$ is isomorphic to $(H, \sqsubseteq)$ as a Heyting algebra.
\end{theorem}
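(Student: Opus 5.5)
The plan is to prove the classical Birkhoff representation directly. We show that the map $\eta: H \to \downarrow J(H)$ given by $\eta(x) = \SetComp{j \in J(H)}{j \sqsubseteq x}$ is an order isomorphism. An order isomorphism between lattices is automatically a lattice isomorphism. Since a finite distributive lattice carries exactly one Heyting implication, namely $x \Rightarrow y = \bigsqcup\SetComp{z}{x \sqcap z \sqsubseteq y}$ as noted before Example~\ref{example:non-heyting-complete-lattice}, and implication is defined from the order, the map $\eta$ is then automatically a Heyting algebra isomorphism. So the whole task is to show that $\eta$ is well defined, order-preserving, order-reflecting and surjective.

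\textbf{Well-definedness and monotonicity.} Both are immediate. The set $\eta(x)$ is downward closed in $J(H)$ by transitivity, and $x \sqsubseteq y$ implies $\eta(x) \subseteq \eta(y)$.

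\textbf{Order reflection.} The key step is that every $x \in H$ is the join of $\eta(x)$; the empty join is $\bot$, which covers the case $x=\bot$. I would prove this by well-founded induction on the finite poset. If $x$ is $\bot$ or join-irreducible, the claim is trivial. Otherwise $x = a \sqcup b$ with $a,b \sqsubset x$. By induction $a = \bigsqcup \eta(a)$ and $b = \bigsqcup\eta(b)$, and both $\eta(a)$ and $\eta(b)$ are contained in $\eta(x)$. Hence $x \sqsubseteq \bigsqcup \eta(x) \sqsubseteq x$. Now if $\eta(x) \subseteq \eta(y)$, then $x = \bigsqcup\eta(x) \sqsubseteq \bigsqcup \eta(y) = y$, which gives order reflection and injectivity.

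\textbf{Surjectivity.} Given a down-set $D \subseteq J(H)$, put $x = \bigsqcup D$, which exists by finiteness. Clearly $D \subseteq \eta(x)$. Conversely, if $j \in J(H)$ and $j \sqsubseteq \bigsqcup D$, then iterating Lemma~\ref{lemma:irreducible-forks} over the finite join gives $j \sqsubseteq d$ for some $d \in D$. Since $D$ is downward closed, $j \in D$. Thus $\eta(x) = D$.

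\textbf{Where the work lies.} The main point needing care is the transfer to the Heyting structure. I would state explicitly that on $(\downarrow J(H), \subseteq)$ the implication $U \Rightarrow V$ is the largest down-set $W$ with $U \cap W \subseteq V$, namely $\SetComp{j}{\forall k \sqsubseteq j.\, k \in U \rightarrow k \in V}$. This is characterized purely order-theoretically, so the order isomorphism $\eta$ carries $\Rightarrow_H$ to it. Everything else uses only Lemma~\ref{lemma:irreducible-forks} and finiteness.
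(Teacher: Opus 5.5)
The paper gives no proof of this statement. It quotes it as Birkhoff's classical theorem and uses it as a black box, for instance in Proposition~\ref{prop:heyting-dimension-gap-c5}, so there is no in-paper argument to match, and your proof is correct and complete on its own. It is essentially the finite specialisation of the paper's proof of Theorem~\ref{thm:birkhoff-embedding}. That proof uses the same map $\Downarrow$ (your $\eta$), handles joins via Lemma~\ref{lemma:irreducible-forks}, and gets injectivity by writing each element as a finite join of join-irreducibles.

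Your well-founded induction proves exactly the fact that the paper asserts without proof: finite distributive lattices are join-irreducibly generated. Note that this step does not even need distributivity. Distributivity enters only through Lemma~\ref{lemma:irreducible-forks} in the surjectivity step. Surjectivity is the ingredient that is unavailable in dimension one, which is why the paper only obtains an embedding there.

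Routing everything through an order isomorphism is also cleaner than checking $\sqcap$, $\sqcup$ and $\Rightarrow$ separately, since all three are determined by the order. Your formula for $U \Rightarrow V$ agrees with the one verified in Proposition~\ref{prop:dcr-embedding}.

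The one detail to state explicitly is the case $D = \emptyset$ in surjectivity. There $\bigsqcup D = \bot$, and Lemma~\ref{lemma:irreducible-forks} does not apply to an empty join. Instead you need the convention $\bot \notin J(H)$ to conclude $\eta(\bot) = \emptyset$.
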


\begin{body}
For infinite distributive lattices $L$, the conclusion of Theorem~\ref{thm:birkhoff} can fail maximally. In Boolean algebras, atoms and join-irreducibles coincide, so for an infinite, atomless Boolean algebra $B$, one obtains the triviality of the lattice $\downarrow J(B)$.
\end{body}

\begin{body}
Below, we give a definable analogue of the covariant regular representation for those definable posets which have bounded width in the strong sense of Definition~\ref{def:chain-decomposition}. 
\end{body}

\begin{definition}\label{def:dcr-rep}
Consider a definable poset $(P, \sqsubseteq_P)$. A \textit{definable covariant regular representation} $(L,E)$ of $(P, \sqsubseteq_P)$ consists of the following data:
\begin{itemize}
    \item a definable set $L$ (the \textit{code set}), and
    \item a definable binary relation $E \subseteq P \times L$ (\textit{membership}),
\end{itemize}
subject to the following conditions:
\begin{enumerate}
    \item for each code $x \in L$, the fiber $E_x = \SetComp{y \in P}{yEx}$ yields a downward-closed subset of $P$,
    \item for every definable downward-closed $S \subseteq P$ we can find a unique code $x \in L$ so that $E_x = S$.
\end{enumerate}
\end{definition}

\begin{proposition}\label{prop:dcr-embedding}
Take a definable poset $(P, \sqsubseteq_P)$ with a definable covariant regular representation $(L,E)$. Then we can find a definable relation $\sqsubseteq$ on $L$ making $(L,\sqsubseteq)$ into a Heyting algebra so that the poset $(P, \sqsubseteq_P)$ embeds definably into $(L, \sqsubseteq)$ via the map $\downarrow\! x$ that takes each $x \in P$ to the $(L,E)$-code of the set $\SetComp{y \in P}{y \sqsubseteq x}$.
\begin{proof}
For codes $c,d \in L$, define $c \sqsubseteq d$ to abbreviate $\forall x \in P. xEc \rightarrow xEd$. The existence and uniqueness clauses of Definition~\ref{def:dcr-rep} yield that $(L, \sqsubseteq)$ is a bounded distributive lattice isomorphic to the lattice of definable downward-closed subsets of $P$. We can embed $(P, \sqsubseteq_P)$ into $(L, \sqsubseteq)$ by mapping each $p \in P$ to the code of the set $\downarrow\! p = \SetComp{x \in P}{x \sqsubseteq p}$. To show the definability of this map, write it as the definable set
$$ \SetComp{(p,l) \in P \times L}{\forall x.xEl\leftrightarrow x \sqsubseteq p}.$$
We claim that the lattice of definable downward-closed subsets of $P$ forms a Heyting algebra with the implication operation $B \Rightarrow C$ given by the set $\SetComp{p \in P}{\downarrow\!p \cap B \subseteq C}$. It suffices to show that for downward-closed subsets $A,B,C$ of $P$, we have $A \subseteq B \Rightarrow C$ precisely if $A \cap B \subseteq C$.

First, assume that $A \subseteq B \Rightarrow C$, and take any $x \in A \cap B$. We want to show $x \in C$. Since $x \in A$, by assumption $x \in B \Rightarrow C$ as well, in other words $\downarrow\!x \cap B \subseteq C$. But $x \sqsubseteq x$ and $x \in B$, so $x \in \downarrow\!x \cap B$, and therefore $x \in C$ as required.

For the other direction assume that $A \cap B \subseteq C$, and take any $x \in A$. We want to show that $x \in B \Rightarrow C$, or in other words that $\downarrow\!x \cap B \subseteq C$. Take any $y \in \downarrow\!x \cap B$. We then need to show that $y \in C$ as well. But $y \sqsubseteq x$ and $x \in A$: by downward-closure of $A$, we get that $y \in A$. Therefore, $y \in A \cap B$ holds, and by assumption so does $y \in C$ as required. This shows that the lattice of definable downward-closed subsets of $P$ forms a Heyting algebra.
\end{proof}
\end{proposition}

\begin{body}
Note that the embedding into the definable covariant regular representation given in Proposition~\ref{prop:dcr-embedding} preserves all meets that exist in the poset, but need not preserve joins.
\end{body}

\begin{example}
The bounded distributive lattice $([0,1]^2, \sqsubseteq)$, where $(x_1,y_1) \sqsubseteq (x_2,y_2)$ precisely if $x_1 \leq x_2$ and $y_1 \leq y_2$, does not admit definable covariant regular representations.
\end{example} 

\begin{lemma}\label{lemma:dcr-for-linear-orders}
Any definable linear order $(P, \sqsubseteq_P)$ admits a definable covariant regular representation.
\begin{proof}
Identify $(P, \sqsubseteq_P)$ with its image in its definable completion $(\overline{P}, \sqsubseteq)$. Note that $\overline{P}$ has a minimum element $\sup \emptyset$. Let $P'$ denote the set consisting of those elements of $P$ which have an immediate predecessor in $P$. Define the code set $$ L = \SetComp{(x,y) \in \overline{P} \times \{0,1\}}{ (x \in P' \wedge y = 1) \vee (x \in P\setminus P') \vee (x \not\in P \wedge y = 0) }$$
and the membership relation
$$ E = \SetComp{(p,(x,y))\in P \times L}{p \sqsubset x \vee (y = 1 \wedge p = x)}.$$
For a downward-closed subset $S$ of $P$, write the corresponding code $s$ as
$$s = \begin{cases}
    (\sup S, 1) & \text{if } \sup S \in S \\
    (\sup S, 0) & \text{otherwise}
\end{cases}$$
We then have $E_s = S$ as desired. Uniqueness of $s$ follows immediately since each element of $L$ encodes a different subset of $P$.
\end{proof}
\end{lemma}

\begin{definition}\label{def:chain-decomposition}
Consider a definable poset $(P, \sqsubseteq)$ with $P \subseteq R^n$. A \textit{chain decomposition} of $(P, \sqsubseteq)$ consists of finitely many definable cells $T_1, \dots, T_m$, each $T_i \subseteq P$ subject to the following conditions:
\begin{enumerate}
    \item $P = \bigcup_{i} T_i$,
    \item each restriction $(T_i, \sqsubseteq)$ constitutes a linear order.
\end{enumerate}
We call the chain decomposition \textit{disjoint} if $T_i \cap T_j = \emptyset$ for all $i \neq j$.
\end{definition}

\begin{proposition}\label{prop:disjoint-chain-decomposition}
If a definable poset $(P, \sqsubseteq)$ has a chain decomposition, then it also has a disjoint chain decomposition.
\begin{proof}
Given a chain decomposition $T_1, \dots, T_n$ of $(P, \sqsubseteq)$, set 
$S_1 = T_1$ and $S_{i+1} = T_{i+1} \setminus \bigcup_{j\leq i} S_j$ for each $i < n$. Then a cell decomposition partitioning each of $S_1, \dots, S_n$ gives the desired disjoint chain decomposition.
\end{proof}
\end{proposition}

\begin{theorem}\label{thm:dcr-for-chain-decomposed-orders}
Every definable poset $(P, \sqsubseteq)$ with a chain decomposition admits a definable covariant regular representation.
\begin{proof}
Using Proposition~\ref{prop:disjoint-chain-decomposition}, work with a disjoint chain decomposition $T_1, \dots, T_n$ of  $(P, \sqsubseteq)$. Invoke Lemma~\ref{lemma:dcr-for-linear-orders} to obtain definable covariant regular representations $(L(i), E(i))$ for each linearly ordered set $(T_i,\sqsubseteq)$. Define the set of pre-codes
$$L' = \prod_{i} L(i)$$
and the pre-membership relation $xE'(l_1,\dots,l_n)$ for $x \in P$ and $(l_1,\dots,l_n) \in L'$ by the formula
$$ \bigvee_i x \in T_i \wedge(x,l_i) \in E(i).$$
Call a pre-code $l \in L'$ downward-closed if $xE'l$ implies $y E'l $ for all $y \sqsubseteq x$. Downward-closed pre-codes form a definable set, so let the set $L$ of codes consist of all downward-closed pre-codes. Define the membership relation $E$ as the restriction of $E'$ to $L$ in the second argument. It is then clear that each $E_l$ represents a distinct downward-closed subset of $P$. We now construct a representing code $s$ with $E_s = S$ for each definable downward-closed subset $S \subseteq P$. Given such an $S$, the intersection $S \cap T_i$ is downward-closed in each $T_i$. Hence, we can find some code $s(i)$ so that $E(i)_{s(i)} = S \cap T_i$. Let $s = (s(1),\dots,s(n))$. Clearly $s \in L'$. We claim that $E'_s = S$.

First, assume that $x \in E'_s$ for some $x \in P$. Then by definition of pre-membership we can find a unique $i$ so that $x \in T_i$ and $(x,s(i)) \in E(i)$. Since $E(i)_{s(i)} = S \cap T_i$, the latter clause implies $x \in S$.

For the other direction, assume that $x \in S$. Since $T_1, \dots, T_n$ constitutes a disjoint chain decomposition of $P$, we can find a unique $i$ so that $x \in T_i$. But then $x \in S \cap T_i$, and thus $(x,s(i)) \in E(i)$. By definition of pre-membership, $xE's$.

Since $E'_s = S$ and $S$ is a downward-closed subset of $P$, the pre-code $s$ is downward-closed as well, giving $s \in L$ and $E_s = E'_s = S$ as required. 
\end{proof}
\end{theorem}

\subsection*{Join-irreducibly generated completion}

\begin{definition}\label{def:join-irreducibly-generated}
We call a distributive lattice $(H, \sqsubseteq)$ \textit{join-irreducibly generated} (or \textit{jig}) if for any $x \in H$ we can find finitely many join-irreducible elements $i_1,\dots,i_n \in H$ so that $x = i_1 \sqcup i_2 \sqcup \dots \sqcup i_n$. We allow $n=0$ for the empty join which takes the value $\bot_H$.
\end{definition}

\begin{body}
All finite distributive lattices satisfy Definition~\ref{def:join-irreducibly-generated}. However, as we have seen in Section~\ref{body:gluing-in-one-dim}, infinite distributive lattices may fail to be jig, already in dimension one. In this short but technical subsection, we show how to embed such one-dimensional bounded distributive lattices into jig ones in a definable manner.
\end{body}

\begin{theorem}\label{thm:jig-embedding}
Any definable bounded distributive lattice structure $(H,\sqsubseteq)$ of dimension one embeds definably into a jig bounded distributive lattice of dimension one.
\end{theorem}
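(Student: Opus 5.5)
By Corollary~\ref{cor:tame-variation-blueprints}, I may assume that $(H,\sqsubseteq)$ is a cohesive tower over a cohesive blueprint $(M, M_1<\dots<M_n, (H_i), (l_{i,j}))$. Using Proposition~\ref{prop:homogeneous-order-embedding}, I regard $M$ as a subset of $R$. The plan is to enlarge the blueprint by finitely many new point cells, so that the resulting tower $\widehat H$ is still a cohesive tower (hence definable, bounded, distributive and one-dimensional by Lemma~\ref{lemma:cohesive-tower}), contains $H$ as a sublattice, and is jig. The example of Section~\ref{body:gluing-in-one-dim} shows what goes wrong. In $[0,1]\ltimes_\tau\{0,1\}$ the element $(m,1)$ equals $(m,0)\sqcup(m',1)$ for every $m'<m$. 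So the only candidate join-irreducibles lie at the missing left endpoint of the open cell. This suggests the fix: adjoin a \emph{limit fiber} just below each interval cell.

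\textbf{Construction.} For every interval cell $M_i$, let $a_i=\inf M_i$, taken in the definable completion. After a definable order-preserving reparametrisation of $M$ inside $R$, I can create room and insert a new point cell $\{a_i^+\}$ immediately before $M_i$ (and after $M_{i-1}$). I assign it the fiber $H_i$. For the gluing morphisms I set:
\begin{itemize}
\item $l_{a_i^+,j}=l_{i,j}$ for $j\geq i$;
\item $l_{k,a_i^+}=l_{k,i}$ for $k<i$;
\item $l_{a_i^+,a_i^+}=l_{i,i}$.
\end{itemize}
These choices make the composition condition $l_{i,k}=l_{j,k}\circ l_{i,j}$ hold. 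Cases involving the new point reduce to existing ones via $l_{i,i}\circ l_{i,i}=l_{i,i}$ (Theorem~\ref{thm:tame-variation}(2)) together with the old composition laws. The cohesion identities at the new index are literally the old ones for index $i$, because the same maps occur. Definability is immediate, since only finitely many points are added, and the dimension stays one. The embedding $H\hookrightarrow\widehat H$ is the identity on triples. It preserves $\sqcup$ and $\sqcap$ because, for two old elements, the formulas of Lemma~\ref{lemma:cohesive-tower} only involve the old $l_{i,j}$ and $r_{j,i}$, and these are unchanged.

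\textbf{Jig verification (main obstacle).} I take an element $(j,m,p)$ of $\widehat H$ and argue by induction on the cell index $j$, and within a fiber on the height of $p$ in $H_j$, which is bounded by $N$.
\begin{itemize}
\item If $m$ lies in a point cell, I write $p$ as a join of join-irreducibles of the finite lattice $H_j$. I then show each $(j,m,q)$ with $q$ join-irreducible is either join-irreducible in $\widehat H$, or is a join of elements from strictly earlier cells (via $l_{k,j}$), to which the induction applies.
\item If $m$ lies in an interval cell $M_j$, I aim to prove the decomposition
\[(j,m,p)=(j,m,\bot_j)\sqcup(a_j^+,p),\]
whose right-hand side equals $(j,m,l_{j,j}(p))$. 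This requires first splitting $p$ in $H_j$ as $l_{j,j}(p)\sqcup p'$, and then showing that the residual part $p'$ arises from chain elements and earlier cells.
\end{itemize}
I expect this splitting to be the delicate point. To handle it, I would use idempotence of $l_{j,j}$ together with the observation that, for $m'<m$ in $M_j$, $(j,m,q)=(j,m,\bot_j)\sqcup(j,m',q)$ whenever $q=l_{j,j}(q)$. From this I would deduce that every $q\in H_j$ decomposes as a fixed point of $l_{j,j}$ joined with elements below the image of $\bot$-level gluings from earlier cells. Finally, I would check that the chain elements $(j,m,\bot_j)$ and the limit-fiber elements $(a_j^+,q)$, for $q$ join-irreducible in $H_j$, are join-irreducible in $\widehat H$, using Lemma~\ref{lemma:principal-chain-prime} and the explicit join formula.
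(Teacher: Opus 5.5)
\textbf{Construction.} Your construction is the paper's. Inserting a point cell $a_j^+$ before each interval cell $M_j$, carrying the fiber $H_j$ and the gluing maps of $M_j$, gives exactly the cohesive tower over the position chain $\overline{M}$ of Definition~\ref{def:jig-position-chain}. The identity embedding is fine too, provided the isomorphism of Corollary~\ref{cor:tame-variation-blueprints} is the definable one coming from the lexicographic coordinates.

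\textbf{The gap.} It is in the jig verification for interval cells, and it is not just vagueness: the decomposition you aim for is false. You want every $q\in H_j$ to be a fixed point of $l_{j,j}$ joined with material glued in from earlier cells. Then every element over an interval position would be generated by chain elements, limit-fiber elements and earlier cells. Take $H=[0,1]\times\{0,1\}$ with the lexicographic order, which is a chain. Its principal chain is $[0,1]\times\{0\}$ and every principal fiber is a two-element chain. Every gluing map sends everything to the fiber bottom: this holds both for $l_{j,j}$ on the interval cell and for the map from the point cell $\{0\}$. So for $0<m<1$ the element $(m,1)$ is neither a fixed point of $l_{j,j}$ nor below anything glued in from earlier. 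It is simply join-irreducible, and it is missing from your final list of join-irreducibles (chain elements and limit-fiber elements). Your plan therefore cannot go through even on this trivially jig lattice.

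\textbf{The missing step.} Apply, at interval positions, the same dichotomy you sketch for point cells, stated correctly. Reduce to $p$ join-irreducible in $H_j$; the case $p=\bot_j$ is immediate from the join formula. Suppose $(j,m,p)=x\sqcup y$ with $x,y\neq(j,m,p)$. The join formula puts one summand, say $y$, at position $(j,m)$ and $x$ at a strictly earlier position. Join-irreducibility of $p$ in $H_j$ then forces $p$ to be the gluing image of the fiber coordinate of $x$, so $(j,m,p)=x\sqcup(j,m,\bot_j)$. Note that even for point cells this decomposition involves $(j,m,\bot_j)$, not only earlier elements.
\begin{itemize}
\item If no such decomposition exists, $(j,m,p)$ is itself join-irreducible. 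This is the case your plan omits.
\item If $x$ lies in an earlier cell or at $a_j^+$, induct on $x$.
\item If $x$ lies in $M_j$ itself, then $p=l_{j,j}(q)$, so idempotence gives $l_{j,j}(p)=p$. Exactly in this case your identity $(j,m,p)=(j,m,\bot_j)\sqcup(a_j^+,p)$ holds, with $(a_j^+,p)$ at an earlier index.
\end{itemize}
This is the four-case argument of Proposition~\ref{prop:jig-jig}.

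\textbf{Two smaller points.} First, join-irreducibility of $(j,m,\bot_j)$ (when it is not $\bot$) follows directly from the join formula. Lemma~\ref{lemma:principal-chain-prime} only applies to elements of $C_{\widehat H}$, and bottoms of new fibers need not belong to it, for instance when $a_j^+$ follows a point cell. Second, the elements $(a_j^+,q)$ with $q$ join-irreducible in $H_j$ need not be join-irreducible. In the product order on $[0,1]\times\{0,1\}$, the top of the new fiber is the join of its bottom with the top of the fiber over $0$. This is harmless, since these elements are handled as point-cell elements by the induction.
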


\begin{body}
One could go through Theorem~\ref{thm:tame-variation}, and prove directly that the cohesive towers of Lemma~\ref{lemma:cohesive-tower} embed into jig bounded distributive lattices. However, when the author tried this approach, it required some tradeoffs in other qualities of the paper (for example, one had to treat definability separately). Ultimately, one finds it easier to do the whole proof in one go.
\begin{itemize}
    \item Throughout this subsection, work in a fixed one-dimensional bounded distributive lattice $(H, \sqsubseteq_H)$.
    \item Let $N$ denote the bound of Proposition~\ref{prop:fiber-uniform-bound} for $(H, \sqsubseteq_H)$, and $f_1,\dots, f_N : C_H \rightarrow H$ the corresponding lexicographic coordinate functions.
    \item Apply Theorem~\ref{thm:tame-variation} to obtain a decomposition of the principal chain $C_H$ into finitely many $\sqsubseteq$-cells $M_1, \dots, M_n$, along with the lexicographic fiber and gluing type functions $t$ and $l$.
\end{itemize}
We will use the data above to construct a one-dimensional jig bounded distributive lattice $(G, \sqsubseteq)$ and a definable distributive lattice embedding $e: H \hookrightarrow G$.
\end{body}

\begin{definition}\label{def:jig-position-chain}
For each $i \in \{1,\dots,n\}$, define the set $\overline{M}_i$ as
$$ \overline{M}_i = \begin{cases}
  \{i\} \times M_i & \text{if } \# M_i = 1\\
  \{(-i,0,\dots,0)\} \cup (\{i\} \times M_i) & \text{if }M_i\text{ is an $\sqsubseteq_H$-interval}
\end{cases}$$
then set $\overline{M}=\bigcup_{i \in \{1,\dots,n\}} \overline{M}_i$. We call $\overline{M}$ the \textit{position chain of $G$}, its elements \textit{chain positions}, and the nonzero integers between $-n$ and $n$ the \textit{chain indices}.
\end{definition}

\begin{definition}\label{def:jig-chain-orders}
For chain indices $x,y$, define the linear order relation $x <_{\overline{M}} y$ as
$$ |x| < |y| \vee (|x| = |y| \wedge x < 0 \wedge 0 < y),$$
so that $-1 <_{\overline{M}} 1 <_{\overline{M}} -2 <_{\overline{M}} 2$ and so on.
Between chain positions $(i_1,m_1)$, define the partial order relation $(i_1,m_1) \sqsubseteq_{\overline{M}} (i_2,m_2)$ to hold precisely if one of the following three cases occurs:
\begin{itemize}
    \item $i_1 <_{\overline{M}} i_2$,
    \item $i_1 = i_2 < 0$, or
    \item $0<i_1=i_2$ and $m_1\sqsubseteq_H m_2$.
\end{itemize}
\end{definition}

\begin{body}
Informally, the construction of Definition~\ref{def:jig-chain-orders} amounts to gluing a new point to the bottom of each $\sqsubseteq_H$-interval $M_i$, placing it just before $M_i$ and just after all the preceding cells. We will realize our completion $(G, \sqsubseteq)$ as a cohesive blueprint over $\overline{M}$.
\end{body}

\begin{definition}\label{def:jig-underlying-set}
Using the chain $\overline{M}$, let the completion have underlying set
$$ G = \SetComp{(i,m,p) \in \overline{M} \times \{1,\dots,N\}}{\exists q \leq N. t_{|i|}(p,q)\neq 0}.$$
Assuming (w.l.o.g.) that $|i| \leq |j|$ and $(i,m_1) \sqsubseteq_{\overline{M}} (j,m_2)$, define the join operation $\sqcup$ on $G$ as
$$(i,m_1,p) \sqcup (j,m_2,q) = \begin{cases}
    \left(j,m_2,t_{|j|}(p,q)\right) & \text{if } (i,m_1)=(j,m_2) \\
    \left(j,m_2,t_{|j|}(l_{|i|,|j|}(p),q)\right) & \text{otherwise}.
\end{cases}
$$
Note that $\sqcup$ is the join operation of a cohesive tower construction over $\overline{M}$,  with the cohesion identities inherited from the original fiber and
gluing data in Theorem~\ref{thm:tame-variation}, and so $(G, \sqsubseteq)$ is a one-dimensional bounded distributive lattice.
\end{definition}

\begin{body}
One could summarize the idea behind Definition~\ref{def:jig-underlying-set} as follows. We adjoin a new fiber below each interval $M_i$. This new fiber not only has the same lexicographic fiber type as $M_i$ (so that operations within the new fiber work the same way as they do in fibers of $M_i$), but has the same lexicographic gluing types to every other fibers as well: the join formula reflects how the fiber and gluing types determine the lattice operations in Proposition~\ref{prop:lex-determination}. Note that in general, the new negative positions in $G$ cause no new algebraic phenomena to emerge: the calculation of joins and meets reduces to the calculation of lexicographic coordinates, and every coordinate calculation that happens in them already happens in positive chain positions inside the same interval cell. Thus, e.g. distributivity in $(G,\sqsubseteq)$ reduces to distributivity in $(H,\sqsubseteq_H)$.
\end{body}

\begin{proposition}\label{prop:jig-embedding-map}
The function $e: H \rightarrow G$ which sends $x \in H$ to the unique $(i,m,p)$ so that $f_p(m) = x$ and $m \in M_i$, is a definable embedding of the bounded distributive lattice $(H,\sqsubseteq_H)$ into $(G,\sqsubseteq)$.
\begin{proof}
We need to show only the preservation of joins. Take two elements $f_p(m) \in H$ and $f_q(n) \in H$, where $m \sqsubseteq_H n$, $m \in M_i$ and $n \in M_j$. As usual, we treat only the harder case $m \neq n$. We then have
\begin{align*}
    e(f_p(m) \sqcup_H f_q(n))
    &= e(f_{t_j(l_{i,j}(p),q)}(n))  \\
    &= (j, n, t_j(l_{i,j}(p),q))  \\
    &= (i,m,p) \sqcup (j, n, q) \\
    &= e(f_p(m)) \sqcup e(f_q(n)) \\
\end{align*}
as claimed.
\end{proof}
\end{proposition}

\begin{proposition}\label{prop:jig-jig}
The bounded distributive lattice $(G,\sqsubseteq)$ is jig.
\begin{proof}
Argue by induction on the chain index of elements. Assume that $(j,m,p) \in G$, and that for any $i <_{\overline{M}} j$, one can write every element with chain index $i$ as a join of join-irreducibles in $(G, \sqsubseteq)$. We show that we can then write $(j,m,p)$ itself as a join of join-irreducibles. Let $b$ denote the $\sqsubseteq$-least element with chain position $(j,m)$, and let $x \sqsubseteq' y$ denote $(j,m,x) \sqsubseteq (j,m,y)$. Then $b$ is $\sqsubseteq'$-minimal. Without loss of generality, assume $(j,m,p)$ does not arise as a nontrivial join of elements with the same chain position. We then have the following cases to consider.

\vspace{0.5em} \textbf{Case 1.} $p = b$. Then $(j,m,b)$ cannot arise as a nontrivial join in $G$. Write $(j,m,b) = (i,m_1,p_1) \sqcup (j',m_2,p_2)$, w.l.o.g. assuming that $|i| \leq |j'|$ and $m_1 \sqsubseteq_{\overline{M}} m_2$. By definition of $\sqcup$, we immediately have that $j' = j$ and $m_2 = m$. We have ruled out $(i,m_1) = (j,m)$ by assumption, so by the second case of the definition of $\sqcup$ we have that $p_2 \sqsubseteq' t_{|j|}(l_{|i|,|j|}(p_1),p_2) \sqsubseteq' b$, and from the $\sqsubseteq'$-minimality of $b$ we conclude $p_2 = b$, showing the join-irreducibility of $(j,m,b) \in G$.

\vspace{0.5em} \textbf{Case 2.} We can find an earlier chain index $i <_{\overline{M}} j$ so that $(j,m,p) = (i,m_1,q) \sqcup (j,m,b)$. Then we can write $(i,m_1,q)$ as a join of join-irreducibles by the inductive hypothesis, and $(j,m,b)$ is join-irreducible by case 1, so we can write $(j,m,p)$ itself as a join of join-irreducibles.

\vspace{0.5em} \textbf{Case 3.} We can write $(j,m,p) = (j,m_1,q) \sqcup (j,m,b)$ for some $m_1 \sqsubset_{\overline{M}} m$. Then we must have $j > 0$ and by the definition of $\sqcup$,
$$(j,m,p) = (j,m, t_j(l_{j,j}(q),b)) = (j,m, l_{j,j}(q)),$$
giving $l_{j,j}(q) = p$. By the second clause of Theorem~\ref{thm:tame-variation}, this means $l_{j,j}(p) = l_{j,j}(l_{j,j}(q)) = l_{j,j}(q) = p$, and therefore that
\begin{align*}
    (-j,0,\dots,0,p) \sqcup (j,m,b) &= (j,m,t_{j}(l_{j,j}(p), b)) & (\text{defn. of } \sqcup) \\
    &=(j,m,t_{j}(p, b)) & (\text{by } l_{j,j}(p) = p)\\
    &=(j,m,p).
\end{align*}
Then we can write $(-j,0,\dots,0,p)$ as a join of join-irreducibles by the inductive hypothesis, and $(j,m,b)$ is join-irreducible by case 1, so we can write $(j,m,p)$ itself as a join of join-irreducibles.

\vspace{0.5em} \textbf{Case 4.} Neither of the previous cases hold. Then $(j,m,p)$ is join-irreducible. If we can write $(j,m,p) = x \sqcup y$ for $x\neq (j,m,p)$ and $y \neq (j,m,p)$, then by definition of $\sqcup$ one of $x,y$ (say the latter) must have chain position $(j,m)$. But then we could write $(j,m,p) = x \sqcup y = x \sqcup (y \sqcup (j,m,b)) = (x \sqcup (j,m,b)) \sqcup y$, which contradicts the assumption that $(j,m,p)$ does not arise as a nontrivial join of elements with the same chain position.

\vspace{0.5em} In each case we have managed to write $(j,m,p)$ as a join of join-irreducible elements in $G$. Hence, by induction, $(G,\sqsubseteq)$ is jig. 
\end{proof}
\end{proposition}

This concludes the proof of Theorem~\ref{thm:jig-embedding}.

\subsection*{Birkhoff representation}

\begin{proposition}\label{prop:chain-decomposition-for-dim-one}
Every definable bounded distributive lattice with $\dim H = 1$ has a chain decomposition.
\begin{proof}
Take a definable bounded distributive lattice $(H, \sqsubseteq)$ with $\dim H = 1$. Use Theorem~\ref{thm:tame-variation} to decompose $C_H$ into finitely many linearly ordered $\sqsubseteq$-cells $M_1,\dots,M_n$. Fix some interval cell $M_i$ and let $f_p$ denote some lexicographic coordinate function defined on $M_i$. It suffices to prove that $(f_p(M_i), \sqsubseteq)$ itself constitutes a linear order. Assume that $x \sqsubseteq y$. We need to prove $f_p(x) \sqsubseteq f_p(y)$: since $M_i$ itself is linearly ordered, the result then follows. Working in lattice notation, we wish to prove $f_p(x) \sqcup f_p(y) = f_p(y)$. We can compute
\begin{align*}
    f_p(x) \sqcup f_p(y)
    &= (x \sqcup f_p(x)) \sqcup (y \sqcup f_p(y)) & (\text{by } x,y \in C_H) \\
    &= (x \sqcup y) \sqcup f_p(x) \sqcup f_p(y) & (\text{associativity}) \\
    &= y \sqcup f_p(x) \sqcup f_p(y) & (\text{by } x \sqsubseteq y) \\
    &= f_{l_{i,i}(p)}(y) \sqcup f_p(y) & (\text{defn. of } l_{i,i}) \\
    &= f_{t_i(l_{i,i}(p),p)}(y) & (\text{defn. of } t_{i})
\end{align*}
so $f_p(x) \sqcup f_p(y) = f_p(y)$ reduces to proving the equality $t_i(l_{i,i}(p),p) = p$.
To prove this, choose some $z \in M_i$ so that $x \sqsubset y \sqsubset z$.

\vspace{0.5em}\textbf{Claim 1.} The equality $f_p(x) \sqcup f_p(y) \sqcup f_p(z) = f_p(x) \sqcup f_p(z)$ holds. Repeating the previous calculation we have that
$$ f_p(x) \sqcup f_p(z) = f_{t_i(l_{i,i}(p),p)}(z)$$
and
$$f_p(x) \sqcup f_p(y) \sqcup f_p(z) = f_{t_i(l_{i,i}(p),p)}(y) \sqcup f_p(z) = f_{t_i(l_{i,i}(t_i(l_{i,i}(p),p),p))}(z),$$ where we can prove the equality of the indices as follows:
\begin{align*}
    t_i(l_{i,i}(t_i(l_{i,i}(p),p),p))
    &= t_i(t_i(l_{i,i}(l_{i,i}(p)),l_{i,i}(p)),p) &  \\
    &= t_i(t_i(l_{i,i}(p),l_{i,i}(p)),p) & (\text{by $l_{i,i} \circ l_{i,i} = l_{i,i}$}) \\
    &= t_i(l_{i,i}(p),p).
\end{align*}
Thus, $f_p(x) \sqcup f_p(y) \sqcup f_p(z) = f_p(x) \sqcup f_p(z) $ holds as claimed.

\vspace{0.5em}\textbf{Claim 2.} Recall that the cells of the decomposition of Theorem~\ref{thm:tame-variation} determine the lexicographic fiber and gluing types. Since we work over a single such cell $M_i$, we can (appealing to Lemma~\ref{lemma:cohesive-tower}) choose a single index $q$ so that $f_p(x) \sqcap f_p(y) = f_q(x)$ and $f_p(y) \sqcap f_p(z) = f_q(y)$. We claim that for this $q$, the equality $p = t_i(l_{i,i}(q),q)$ holds. We know that
\begin{align*}
    f_p(y)
    &= f_p(y) \sqcap (f_p(x) \sqcup f_p(y) \sqcup f_p(z)) & (\text{absorption}) \\
    &= f_p(y) \sqcap (f_p(x) \sqcup f_p(z)) & (\text{Claim 1}) \\
    &= (f_p(y) \sqcap f_p(x)) \sqcup (f_p(y) \sqcap f_p(z))) & (\text{distributivity}) \\
    &= f_q(x) \sqcup f_q(y) & \\
    &= f_{t_i(l_{i,i}(q),q)}(y).
\end{align*}
Equating the indices, the claim follows.

\vspace{0.5em}With the $q$ of Claim 2, we can now calculate
\begin{align*}
    l_{i,i}(p)
    &= l_{i,i}(t_i(l_{i,i}(q),q)) & (\text{Claim 2}) \\
    &= t_i(l_{i,i}(l_{i,i}(q)),l_{i,i}(q)) &  \\
    &= t_i(l_{i,i}(q),l_{i,i}(q))  & (\text{by $l_{i,i} \circ l_{i,i} = l_{i,i}$}) \\
    &= l_{i,i}(q) & (\text{idempotence})
\end{align*}
which yields $l_{i,i}(p) = l_{i,i}(q)$, and at long last, the required equality
\begin{align*}
    t_i(l_{i,i}(p),p)
    &= t_i(l_{i,i}(q),p) &  \\
    &= t_i(l_{i,i}(q),t_i(l_{i,i}(q),q)) & (\text{Claim 2}) \\
    &= t_i(t_i(l_{i,i}(q),(l_{i,i}(q))),q) & (\text{associativity}) \\
    &= t_i(l_{i,i}(q),q) &   \\
    &= p & (\text{Claim 2})
\end{align*}
follows.
\end{proof}
\end{proposition}

\begin{theorem}\label{thm:birkhoff-embedding}
Every definable bounded distributive lattice $(H, \sqsubseteq_H)$ with $\dim H = 1$ embeds definably into a definable Heyting algebra.
\begin{proof}
By Theorem~\ref{thm:jig-embedding}, it suffices to consider the case of jig $H$. Proposition~\ref{prop:chain-decomposition-for-dim-one} guarantees that $(H, \sqsubseteq_H)$, and consequently also $(J(H), \sqsubseteq_H)$, have chain decompositions. Let $(L, E)$ denote the definable covariant regular representation of $(J(H), \sqsubseteq_H)$ given by Theorem~\ref{thm:dcr-for-chain-decomposed-orders}, equipped with the Heyting algebra structure $(L,\sqsubseteq)$ from Proposition~\ref{prop:dcr-embedding}.

We will prove that $(H, \sqsubseteq_H)$ embeds into $(L,\sqsubseteq)$ as a bounded distributive lattice. Define the map $\Downarrow : H \rightarrow L$ by sending each $h \in H$ to the code of the set $\SetComp{j \in J(H)}{j \sqsubseteq_H h}$. This map is definable by the formula
$$ \Downarrow \ = \SetComp{(h,l) \in H \times L}{\forall j \in J(H). jEl\leftrightarrow j\sqsubseteq_H h}. $$
We claim that $\Downarrow$ constitutes an injective homomorphism of bounded distributive lattices. The preservation of meets, bottom and top follows immediately from the definition. Preservation of joins follows from Lemma~\ref{lemma:irreducible-forks}. For injectivity, assume that $\Downarrow x = \Downarrow y$. We need to prove $x = y$. By our assumption, for each $j \in J(H)$, $j \sqsubseteq_H x$ holds precisely if $j \sqsubseteq_H y$. Use jig to write $x = j_1 \sqcup \dots \sqcup j_n$ for join-irreducible $j_i$. Since each $j_i \sqsubseteq x$ satisfies $j_i \sqsubseteq y$ as well, their join $x$ satisfies $x = j_1 \sqcup \cdots \sqcup j_n \sqsubseteq y$ too. Switching the roles of $x$ and $y$, the same argument gives $y \sqsubseteq x$, from which antisymmetry yields $x = y$.  
\end{proof}
\end{theorem}

\section{Applications}\label{sec:applications}

\subsection*{Free Heyting algebras}

\begin{body}
Heyting algebras originally arose to provide an algebraic semantics for intuitionistic logic, a role analogous to that played by Boolean algebras for classical logic. A natural question is how much of this application survives when we restrict our attention to definable Heyting algebras. While well-understood, the free Heyting algebra on a single generator, $F_1$, already has infinitely many elements. Here, using our previous results, we construct a definable Heyting algebra that contains $F_1$ as a subalgebra.
\end{body} 

\begin{example}\label{example:definable-ha-with-free-subalgebra}
Consider the poset $(S, \sqsubseteq_S)$ with underlying set $S = \{0,1\} \times \left[0,+\infty\right)$ and order relation $\sqsubseteq_S$ given by the clauses
\begin{enumerate}
    \item $(x,a)\sqsubseteq_S(x,b)$ precisely if $a \leq b$,
    \item $(0,a)\sqsubseteq_S(1,b)$ precisely if $a+2 \leq b$, and
    \item $(1,a)\sqsubseteq_S(0,b)$ precisely if $a+1 \leq b$,
\end{enumerate}
as $x$ ranges over $\{0,1\}$ and $a,b$ range over $R$. This poset admits a chain decomposition into the subsets $T_0 = \SetComp{(0,a) \in S}{a \in \left[0,+\infty\right)}$ and $T_1 = \SetComp{(1,a) \in S}{a \in \left[0,+\infty\right)}$, so by Theorem~\ref{thm:dcr-for-chain-decomposed-orders} its covariant regular representation $(H, \sqsubseteq)$ constitutes a Heyting algebra. We claim that $F_1$ occurs as a subalgebra of $(H, \sqsubseteq)$.
\begin{proof}
The integer coordinate elements of $S$ constitute a copy of the Rieger-Nishimura ladder, the dual Kripke frame of $F_1$. Hence, in particular the subalgebra of $H$ generated by the element $\downarrow\!(0,0) \in H$ is isomorphic to $F_1$.
\end{proof}
\end{example}

\begin{body}
The underlying definable set of the Heyting algebra constructed in Example~\ref{example:definable-ha-with-free-subalgebra} has dimension two. It follows from Theorem~\ref{thm:tame-variation} that no one-dimensional definable Heyting algebra contains $F_1$ as a subalgebra.
\end{body}

\begin{proposition}\label{prop:cohesive-locally-finite}
Every definable Heyting algebra $(H, \sqsubseteq)$ with $\dim H = 1$ is locally finite.
\begin{proof}
Write $b \in B_H$ if $b \in C_H$ and $b$ belongs to some singleton cell in the decomposition of Theorem~\ref{thm:tame-variation}. It suffices to prove that for all $x,y \in H$, we have $c_{x \Rightarrow y} \in B_H \cup \{c_x,c_y, c_\top\}$, since then the Heyting subalgebra generated by a finite subset is always contained in a finite union of finite fibers. Assume for a contradiction that $c_{x \Rightarrow y}$ instead belongs to an interval cell $M_i$ in the decomposition, but does not coincide with $c_x \in M_j$ or $c_y \in M_k$.

Since $x \sqcap (x \Rightarrow y) = x \sqcap y$ and the map $c_\bullet$ preserves meets, we have $c_x \sqcap c_{x \Rightarrow y}=c_x \sqcap c_y$. By assumption $c_x \sqcap c_{x \Rightarrow y} \neq c_x$, so we must have $c_x \sqsubset c_{x \Rightarrow y}$.

Without loss of generality assign lexicographic coordinates $f_u(c_{x \Rightarrow y}) = x \Rightarrow y$, $f_v(c_x) = x$ and $f_w(c_y) = y$. Using the fact that $M_i$ is an interval cell, take some $b$ such that $c_{x \Rightarrow y} \sqsubset b$. By Theorem~\ref{thm:tame-variation} the lexicographic fiber type is constant on $M_i$, so $f_u(b)$ is defined.

By the formulae in Lemma~\ref{lemma:cohesive-tower}, we have $x \sqcap f_u(b) = f_v(c_x) \sqcap f_u(b) = f_{t_j(v,r_{i,j}(u))}(c_x)$ on one hand, and $x \sqcap (x \Rightarrow y) = f_v(c_x) \sqcap f_u(c_{x\Rightarrow y}) = f_{t_j(v,r_{i,j}(u))}(c_x)$ on the other. So $x \sqcap f_u(b) = x \sqcap (x \Rightarrow y) = x \sqcap y \sqsubseteq y$. By the definition of the implication operation, we get that $f_u(b) \sqsubseteq x \Rightarrow y$, and applying $c_\bullet$ to both sides also that $b \sqsubseteq c_{x \Rightarrow y}$, contradicting $c_{x \Rightarrow y} \sqsubset b$.
\end{proof}
\end{proposition}

\begin{body}
Since $F_2$ already contains infinite antichains of join-irreducibles, the results above tell us precisely which free Heyting algebras can occur as subalgebras of definable Heyting algebras, and in which dimensions.
\end{body}

\subsection*{Satisfiability dimension}

\begin{body}
Many algebraic laws possess ``all-or-nothing'' qualities. For example, by a well-known result of Gustafson~\cite{gustafson-degree-of-commutativity}, in a non-Abelian finite group $G$, the equality $xy=yx$ cannot hold for more than $\frac{5}{8}$ of all possible pairs of elements $(x,y) \in G$. Other laws in the language of group theory admit similar results, e.g. the solution set of $\SetComp{x \in G}{x^2=1}$ either has cardinality $|G|$ or else at most cardinality $\frac{3}{4}|G|$. On the logic side, Evans~\cite{evans-bck-satisfiability,evans-commutingdegreebckalgebras} investigated such \textit{degree of satisfiability} results in BCK-algebras, while Bumpus and Kocsis~\cite{kocsis-degreesat} gave a classification of one-variable equations in Heyting algebras with respect to degree of satisfiability.
\end{body}

\begin{body}
The existence of Haar measure (and more generally finitely additive measures which ``measure index uniformly'') gives rise to a rich theory of \textit{degree of satisfiability} on infinite groups~\cite{antolin-infinite-dc,tointon-commuting,amir-infinite}. Can one develop similar machinery for determining ``largeness'' that keeps working beyond the group-theoretic setting, and can be used to obtain similar results for equations in other algebraic structures? We propose the o-minimal dimension of the solution set as a candidate machinery for this purpose in the tame setting, and apply the results above to obtain a dimension-theoretic counterpart to the classification of one-variable equations in finite Heyting algebras.
\end{body}

\begin{definition}\label{def:dimension-gap}
Consider a first-order language $\mathcal{L}$ and a $\mathcal{L}$-theory $T$. We say that an $\mathcal{L}$-formula $\varphi(x_1,\dots,x_n)$ in $n$ free variables \textit{has dimensional gap} if in any infinite definable $\mathcal{L}$-structure $(M,\dots)$ so that $(M,\dots) \models T$, one of
\begin{itemize}
    \item $\dim M^n  > \dim \SetComp{(x_1,\dots,x_n) \in M^n}{\varphi(x_1,\dots,x_n)}$, or
    \item $M^n = \SetComp{(x_1,\dots,x_n) \in M^n}{\varphi(x_1,\dots,x_n)}$
\end{itemize}
hold.
\end{definition}

\begin{body}
Propositions~\ref{prop:heyting-dimension-gap-c3c4}~and~\ref{prop:heyting-dimension-gap-c5} supply examples of formulas with and without dimensional gap in the context of Heyting algebras. For now, we illustrate the non-vacuity of the definition using a more elementary example from linear algebra. 
\end{body}

\begin{example}
Work in the language $\mathcal{L} = (+,0, \pi,\lambda\cdot|_{\lambda\in R})$ extending the language of $R$-vector spaces with a unary function symbol $\pi$. Consider the theory $T$ extending the theory of $R$-vector spaces with the following axioms:
\begin{enumerate}
    \item $\forall x.\forall y.\pi(x+y) = \pi(x)+\pi(y)$,
    \item $\forall x.\pi(\lambda\cdot x) = \lambda \cdot \pi(x)$ for each $\lambda \in R$,
    \item $\forall x. \pi (\pi(x)) = \pi(x)$.
\end{enumerate}
Then the formula $\pi(x) = 0$ has dimensional gap.
\begin{proof}
Notice that if $\pi(m) \neq 0$ holds for some $m \in M$, then $\mathrm{im}\:\pi$ is infinite, so $\dim (\mathrm{im}\:\pi) \geq 1$. Moreover, definable sets of the form $\SetComp{x \in M}{\pi(x) = p}$ must have the same dimension $k$ for each $p\in\mathrm{im}\:\pi$. Applying Proposition~4.1.5~of~\cite{vandendries-tame} gives
$$ \dim M = \dim \bigcup_{p \in \mathrm{im}\:\pi} \{p\}\times \SetComp{x \in M}{\pi(x) = p} = \dim (\mathrm{im}\:\pi) + k,$$
which we can rearrange to the claimed inequality $k = \dim M - \dim (\mathrm{im}\:\pi) < \dim M$.
\end{proof}
\end{example}

\begin{body}
Intuitively, one should think of properties with dimensional gap as satisfying an ``all-or-nothing'' property: unless they hold everywhere, the set of counterexamples is large. Proposition~\ref{prop:pillay-large} and Lemma~\ref{lemma:pillay-index} illustrate the relationship between Definition~\ref{def:dimension-gap} and the group-theoretic notion of index.
\end{body}

\begin{definition}
We call a subset $H \subseteq G$ in a group $(G, \cdot)$ \textit{large in the sense of group theory} if we can find finitely many elements $g_1, \dots, g_n\in G$ so that the translates $g_1 H, \dots, g_nH$ together cover $G$, i.e. $G = \bigcup_{i \leq n} g_i H$.
\end{definition}

\begin{body}
Notice that a subgroup has finite index precisely if it is large in the sense of group theory.
\end{body}

\begin{lemma}[Pillay~\cite{pillay-groups}]\label{lemma:pillay-index}
A definable subgroup $H$ of a definable group $G$ satisfies $\dim H = \dim G$ precisely if $H$ has finite index in $G$.
\end{lemma}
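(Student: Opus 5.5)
The plan is to compare $\dim G$ with $\dim H$ through the decomposition of $G$ into left cosets of $H$. All cosets have the same dimension as $H$. Finite index then gives equality of dimensions immediately, while infinite index should force $\dim G$ to be strictly larger than $\dim H$.

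First, each left coset $gH$ is definable, and left multiplication $h \mapsto gh$ is a definable bijection $H \to gH$. Definable bijections preserve dimension (Chapter~4 of~\cite{vandendries-tame}), so $\dim gH = \dim H$ for every $g \in G$.

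For the direction ``finite index implies $\dim H = \dim G$'': if $G = g_1H \cup \dots \cup g_nH$, then the dimension of a finite union being the maximum of the dimensions gives $\dim G = \max_i \dim g_iH = \dim H$.

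For the converse, assume $\dim H = \dim G$ but that $H$ has infinite index, and aim for a contradiction.
\begin{itemize}
\item Consider the definable equivalence relation $x \sim y \leftrightarrow x^{-1}y \in H$ on $G$, whose classes are exactly the left cosets.
\item Since $R$ expands a real-closed field, definable choice holds: Chapter~6, Proposition~1.2 of~\cite{vandendries-tame} gives elimination of imaginaries. This yields a definable set $S \subseteq G$ meeting each $\sim$-class in exactly one point.
\item Infinite index means $S$ is infinite, so $\dim S \geq 1$.
\item The definable map $S \times H \to G$, $(s,h) \mapsto sh$, is injective. Indeed, $sh = s'h'$ forces $s \sim s'$, hence $s = s'$ and then $h = h'$.
\item Therefore, by Chapter~4, Corollary~1.6(iii) of~\cite{vandendries-tame},
$$\dim G \geq \dim (S \times H) = \dim S + \dim H \geq 1 + \dim H,$$
which contradicts $\dim H = \dim G$.
\end{itemize}

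The main obstacle is producing the transversal $S$, since the rest is routine dimension bookkeeping. If I wanted to avoid definable choice, I would instead use the fiber dimension formula (Chapter~4, Proposition~1.5 of~\cite{vandendries-tame}) on the definable set $\SetComp{(x,y) \in G^2}{x^{-1}y \in H}$. Each fiber over $x$ is the coset $xH$, of dimension $\dim H$, so this set has dimension $\dim G + \dim H$. I would then have to compute its dimension a second way, as a union of the sets $gH \times gH$ indexed by the ``space of cosets'', which again needs an infinite definable parametrisation of the cosets. So the transversal via definable choice in the real-closed field setting is the route I would commit to.
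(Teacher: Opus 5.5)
Your proof is correct. The paper gives no argument of its own for this lemma; it imports it from Pillay. Pillay's result holds in an arbitrary o-minimal structure, where definable choice is not available in general, so his proof cannot simply pick coset representatives and instead rests on the structure theory of definable groups developed in that paper.

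You instead exploit the paper's standing assumption that $R$ expands a real-closed field. Definable choice gives a definable transversal $S$, and your map $(s,h)\mapsto sh$ is in fact a definable bijection $S\times H\to G$. It is also surjective, since any $g$ equals $s(s^{-1}g)$ with $s$ the representative of $gH$ and $s^{-1}g\in H$. Hence $\dim G=\dim S+\dim H$ exactly. This settles both directions at once, because $\dim S=0$ precisely when $S$ is finite, and it gives the sharper formula $\dim G=\dim(G/H)+\dim H$ with $G/H$ realised as $S$.

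Your route is shorter and uses only elementary dimension theory plus definable choice, in the same self-contained spirit as the paper's proof of Proposition~\ref{prop:boolean-algebras-are-finite}. What it gives up is generality: it does not cover o-minimal structures lacking definable choice, which Pillay's version does.
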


\begin{proposition}\label{prop:pillay-large}
Take a theory $T$ extending the theory of groups and a one-variable $T$-formula $\varphi(x)$ with dimensional gap in $T$. Then in any model $G$ of $T + \neg\forall x. \varphi(x)$, the set of counterexamples $\SetComp{x \in G}{\neg\varphi(x)}$ is large in the sense of group theory.
\begin{proof}
Follows from Lemma 2.4~of~\cite{pillay-groups}.
\end{proof}
\end{proposition}

\begin{body}
In group theory, one frequently gets true results by replacing ``\textit{finitely presented group}'' with ``\textit{definable group}'', and ``\textit{on a set of positive finitely additive measure}'' with ``\textit{on a set of full dimension}''. As an example, we give Proposition~\ref{prop:virtually-abelian-xsquared}, a straightforward analogue of Theorem~5.1~in~\cite{amir-infinite}.
\end{body}

\begin{proposition}\label{prop:virtually-abelian-xsquared}
Take a definable group $(G,\cdot,\bullet^{-1},1)$. If $\dim \SetComp{x \in G}{x^2 = 1}$ and $\dim G$ coincide, then $G$ is virtually Abelian.
\begin{proof}
Since $G^0$ is always a minimal definable subgroup of finite index in $G$, we can find a coset of $G^0$ that has a full dimension nonempty intersection with $\SetComp{x \in G}{x^2 = 1}$. Pick some $t$ in the interior of this intersection, along with an open neighborhood $V \ni t$. Using $t^2=1$ and the fact that definable bijections preserve dimension, we get that $tV$ is a neighborhood of the identity. For $x \in tV$, we have $tx \in V$, so $(tx)^2=1$ and $txt=x^{-1}$. Considering the preimage of $tV$ with respect to $\bullet^{-1}$ lets us construct a full dimension $W$ with $WW\subseteq tV$. But if $x,y \in W$, then $xy \in tV$, so $$x^{-1} y^{-1} = (txt)(tyt) = txyt = (xy)^{-1} = y^{-1}x^{-1}$$ and $x,y$ commute. This means that the centralizer of any $x \in W$ is a definable subgroup of full dimension. By Lemma~\ref{lemma:pillay-index}, each such centralizer has finite index, so has $G^0$ as a subgroup. Now take an arbitrary $g\in G^0$. Since the centralizer of any element of $W$ contains $g$, we also have $W \subseteq C_G(g)$, and thus $\dim G = \dim W \leq \dim C_G(g)$. One more application of Lemma~\ref{lemma:pillay-index} gives that $C_G(g)$ has finite index in $G$. Again, this means that $G^0$ is contained in $C_G(g)$. Since we chose $g \in G^0$ arbitrarily, any two elements of $G^0$ must commute, hence $G^0$ is Abelian and $G$ is virtually Abelian as claimed.
\end{proof}
\end{proposition}

\begin{body}
Note that dimensional gap behaves better than the natural measure-theoretic notions of largeness available in o-minimal theories. For example, in the measure of~\cite{berarducci-measure}, the lower-dimensional subsets of $[0,1]^n$ are guaranteed to have measure zero. Over the reals $\mathbb{R}$, the converse also holds, and $n$-dimensional subsets have positive measure. However, over a hyperreal field, one can pick an infinitesimal $\varepsilon > 0$, and then $[0,\varepsilon]^n$ constitutes a measure zero set of dimension $n$. Dimensional gap captures sets so small that they remain measure zero even after scaling.
\end{body}

\begin{body}
In what follows, we determine all the one-variable equations (necessarily without parameters) in the language of Heyting algebras which have dimensional gap. Much like in degree of satisfiability, the law of excluded middle ($x \vee \neg x = \top$) exhibits the ``all-or-nothing'' phenomenon (Proposition~\ref{prop:heyting-dimension-gap-c3c4}) in a very strong sense.
\end{body}

\begin{body}
By well-known results of Nishimura and Rieger~\cite{rieger-nishimura-lattice} concerning the free Heyting algebra on one generator (refer e.g. to Section~2~of~\cite{kocsis-degreesat}), one can prove that every system of equations in one free variable in the language of Heyting algebras defines the same set as a single equation of the form $t(x) = \top$ for some term $t(x)$, and falls into one of the following five classes:
\begin{enumerate}
    \item[C1.] trivial (defines the whole set or $\emptyset$),
    \item[C2.] always defines a singleton,
    \item[C3.] always defines the same set as $x \sqcup \neg x = \top$,
    \item[C4.] always defines the same set as $\neg\neg x = x$, or
    \item[C5.] always defines a superset of $\neg\neg x = \top$.
\end{enumerate}
The equations in C1 and C2 have dimensional gap for vacuous reasons. In Proposition~\ref{prop:heyting-dimension-gap-c3c4} we point out that equations in C3 and C4 have dimensional gap since their solution sets are always finite. Finally, in Proposition~\ref{prop:heyting-dimension-gap-c5}, we give a construction showing that the remaining (infinitely many) equations have no dimensional gap.
\end{body}

\begin{proposition}\label{prop:heyting-dimension-gap-c3c4}
The equations $x \sqcup \neg x = \top$ and $\neg \neg x = x$ have dimensional gap.
\begin{proof}
Consider an infinite definable Heyting algebra $(H, \sqsubseteq)$. Check that the set $D_H = \SetComp{x \in H}{\neg\neg x = x}$ constitutes a definable Boolean algebra with the meet operation $\sqcap$ and the join operation $(x,y)\mapsto \neg\neg(x \sqcup y)$. By Proposition~\ref{prop:boolean-algebras-are-finite}, $D_H$ has dimension zero. Since $\SetComp{x \in H}{x \sqcup \neg x = \top} \subseteq D_H$ in any Heyting algebra $H$, both $x \sqcup \neg x = \top$ and $\neg \neg x = x$ have dimensional gap.
\end{proof}
\end{proposition} 

\begin{proposition}\label{prop:heyting-dimension-gap-c5}
Consider a nontrivial one-variable equation $t(x) = \top$ in the language of Heyting algebras, so that $\SetComp{x \in H}{\neg \neg x = \top} \subseteq \SetComp{x \in H}{t(x) = \top}$ in every Heyting algebra $H$. Such an equation does not have dimensional gap.
\begin{proof}
Take such an equation $t(x) = \top$. We construct an infinite definable Heyting algebra where the solution set of $t(x) = \top$ has finite but non-empty complement. In such an algebra, the solution set has maximal dimension.

Using the completeness of intuitionistic propositional logic with respect to finite Heyting algebra semantics, choose a finite Heyting algebra $H$ in which $t(h) \neq \top_H$ for some $h \in H$. Note that $h \neq \top_H$. Invoke Theorem~\ref{thm:birkhoff} to identify $H$ with the covariant regular representation of some finite poset $P$.

Construct a new poset $(Q, \sqsubseteq)$ by taking the ordered sum of $P$ and the interval $([0,1],\leq)$. The resulting infinite poset clearly has a chain decomposition, so apply Theorem~\ref{thm:dcr-for-chain-decomposed-orders} to obtain its definable covariant regular representation $(L, E)$. Identify the codes of $L$ with the definable downward-closed subsets of $Q$, writing $(L, \subseteq)$ for the Heyting algebra structure on $L$. Since $P$ itself constitutes a downward-closed subset of $Q$, identify $H$ with the finite set $\SetComp{l \in L}{l \subseteq P}$, so that $\top_H = P$.

\vspace{0.5em} \textbf{Claim 1.} For any $l \in L$, we have $l \subseteq P$ or $P \subseteq l$. If we do not have $l \subseteq P$, then we have some $i \in [0,1]$ so that $i \in l$. In the ordered sum, $\forall p \in P. p \sqsubseteq i$. Consequently, $P \subseteq l$ as claimed.

\vspace{0.5em} \textbf{Claim 2.}  For any $l \in L$ such that $P \subseteq l$, we have $\neg l = \emptyset$, and consequently $\neg\neg l = \top_L$. As noted in the proof of Proposition~\ref{prop:dcr-embedding}, $\neg l = \SetComp{x \in Q}{\downarrow_Q\! x \cap l = \emptyset}$. Take an arbitrary $x \in Q$. If $x \in P$, then $\downarrow_Q\!x \cap P = \downarrow_Q\! x\neq \emptyset$. If $x \in [0,1]$ instead, then $P \subseteq \downarrow_Q\! x$, so $\downarrow_Q\!x \cap P = P \neq \emptyset$ again. This shows that $x \not\in \neg l$ for an arbitrary $x \in Q$, in other words $\neg l = \emptyset$ as claimed.

\vspace{0.5em} Since $\SetComp{l \in L}{\neg \neg l = \top_L} \subseteq \SetComp{x \in L}{t(x) = \top_L}$, Claim 2 gives that the solution set of the equation $t(x) = \top$ contains at least $\SetComp{l \in L}{P \subseteq l}$. By Claim 1, the complement of the solution set is a subset of the finite set $\SetComp{l \in L}{l \subseteq P}$. Now, one has to verify that this complement is not empty.

First, one can check that the map $f: L \rightarrow H$ given by $f(x) = x \cap P$ is a  definable Heyting algebra homomorphism from $(L,\subseteq)$ to $(H,\subseteq)$: preservation of meets and joins is immediate, and preservation of implication requires only the definition of the implication operation from Proposition~\ref{prop:dcr-embedding}, and the fact that $P$ is a downward-closed subset of $Q$. Since $h \subset P$, this homomorphism satisfies $f(h) = h \cap P = h \neq P$, in other words $f(h) \neq \top_H$. But homomorphisms commute with terms, so $f(t(h)) = t(f(h)) = t(h) \neq \top_H$. Homomorphisms preserve $\top$, but $f(t(h)) \neq \top_H$, and so $t(h) \neq \top_L$. This means that the complement of the solution set $\SetComp{l \in L}{t(l) = \top_L}$ is finite but not empty, and the solution set has full dimension in $L$.
\end{proof}
\end{proposition}

\subsection*{Open questions}

\begin{body}
Theorem~\ref{thm:dcr-for-chain-decomposed-orders} requires its input poset to decompose definably into finitely many linear orders. This condition leads to tedious verifications (Proposition~\ref{prop:chain-decomposition-for-dim-one}) in applications. Especially in light of Proposition~\ref{prop:join-irreducible-antichains}, one finds it much easier to bound the size of the largest antichain (the so-called \textit{width} of the poset) than to construct any specific chain decomposition. Outside the definable setting, this would suffice: Dilworth's theorem states that a poset has finite width $w$ precisely if it has a decomposition into $w$ disjoint linear orders. The author has tried and failed to prove a definable analogue of this result on multiple occasions, leading to the

\vspace{0.5em} \textbf{Question.} Does every definable poset $(P, \sqsubseteq)$ of finite width admit a definable chain decomposition in an o-minimal structure?
\end{body}


\newpage

\bibliography{biblio}
\bibliographystyle{plainurl}

\end{document}